  \newskip\prethm \prethm3.0pt plus1.3pt minus.4pt
  \newskip\posthm \posthm2.7pt plus1.4pt minus.3pt
  \newtheoremstyle{STATEMENT}%
       {\prethm}{\posthm}{\itshape}{\parindent}{\scshape}
       {.}{.6em plus.2em minus.1em}{}
  \newtheoremstyle{EXPLANATION}%
       {\prethm}{\posthm}{}{\parindent}{\scshape}
       {.}{.6em plus.2em minus.1em}{}
\theoremstyle{STATEMENT}
\newtheorem{theorem}{Theorem}[section]
\newtheorem{lemma}{Lemma}[section]
\newtheorem{assertion}{Assertion}[section]
\newtheorem{corollary}{Corollary}[section]
\theoremstyle{EXPLANATION}
\newtheorem{definition}{Definition}[section]
\newtheorem{remark}{Remark}[section]
\newtheorem{example}{Example}[section]
\begin{document}

\title{Local bi-integrability of bi-Hamiltonian systems \\ via bi-Poisson reduction}
\author{I.\,K.~Kozlov\thanks{No Affiliation, Moscow, Russia. E-mail: {\tt ikozlov90@gmail.com} }
}
\date{}

\maketitle

\begin{abstract} We prove that any bi-Hamiltonian system $v = \left(\mathcal{A} + \lambda \mathcal{B}\right)dH_{\lambda}$ that is Hamiltonian with respect all Poisson brackets $\mathcal{A} + \lambda \mathcal{B}$ is locally bi-integrable in both the real smooth case, when all eigenvalues of the Poisson pencil  $\mathcal{P} = \left\{\mathcal{A} + \lambda \mathcal{B}\right\}$ are real, and in the complex analytic case. A complete set of functions in bi-involution is constructed by extending the set of standard integrals, which consists of Casimir functions of Poisson brackets, eigenvalues of the Poisson pencil and Hamiltonians. \end{abstract}

\tableofcontents

\section{Introduction}

Let $M$ be a real $C^{\infty}$-smooth or complex analytic manifold. Two Poisson brackets $\mathcal{A}$ and $\mathcal{B}$ on $M$ are  \textbf{compatible}, if any their linear combination with constant coefficients is also a Poisson bracket. The \textbf{Poisson pencil} generated by these 
compatible Poisson brackets is the set \[ \mathcal{P} = \left\{ \mathcal{A}_{\lambda} = \mathcal{A} + \lambda \mathcal{B} \right\}_{\lambda \in \bar{\mathbb{K}}},\] where $\mathbb{K}=\mathbb{R}$ or $\mathbb{C}$ is the underlying field, $\bar{\mathbb{K}} = \mathbb{K} \cup \left\{\infty\right\}$ and $\mathcal{A}_{\infty} = \mathcal{B}$.

\begin{definition}
A dynamical system $v = \dot{x}$ is called \textbf{bi-Hamiltonian w.r.t. a pencil} $\mathcal{P}$ if it is Hamiltonian w.r.t. all brackets of the pencil, i.e. for any $\lambda \in \bar{\mathbb{K}}$ there exists a smooth function $H_{\lambda}$ such that \begin{equation} \label{Eq:BiHam1} v = \mathcal{A}_{\lambda} d H_{\lambda}. \end{equation} \end{definition}

Since the pioneering work by Franco Magri \cite{Magri78} (which was futher developed in \cite{Gelfand79}, \cite{Magri84} and \cite{Reiman80}), it is well known that integrability of many systems in mathematical physics, geometry and mechanics is closely related to their bi-Hamiltonian nature. In this paper we consider bi-Hamiltonian systems on a finite-dimensional 
 manifold $M^n$. There are two well-known ways of constructing integrals of motion for such systems:

\begin{enumerate}

\item If a system is Hamiltonian w.r.t. a Poisson pencil $\mathcal{P}$, then the \textit{Casimir functions of each bracket} $\mathcal{A}_{\lambda}$ are first integrals of the system. This is precisely the way how the argument shift method by A.\,S.~Mishchenko \& A.\,T.~Fomenko \cite{ArgShift} constructs integrals.

\item If the compatible Poisson brackets $\mathcal{A}$ and $\mathcal{B}$ are nondegenerate, then the \textit{eigenvalues} $\lambda_j(x)$ of the recursion operator $P = \mathcal{A}\mathcal{B}^{-1}$ are integrals of motion a bi-Hamiltonian system \eqref{Eq:BiHam1}. In a general case, the recursion operator $P = \mathcal{A}\mathcal{B}^{-1}$ is not defined, but we can still construct eigenvalues $\lambda_j(x)$ using the Kronecker Canonical Form (KCF) for pairs of skew-symmetric matrices given by the Jordan--Kronecker theorem (see e.g. \cite{BolsZhang}).

\end{enumerate}

It is well-known that, under some mild conditions of regularity, the Casimir functions and the eigenvalues $\lambda_j(x)$ are in \textbf{bi-involution}, i.e. they commute w.r.t. all brackets $\mathcal{A}_{\lambda} = \mathcal{A} + \lambda \mathcal{B}$ (this also follows from Lemma~\ref{L:StandIntBiInv}). The \textbf{rank} of a Poisson pencil $\mathcal{P} = \left\{\mathcal{A} + \lambda \mathcal{B}\right\}$ on $M$ is \begin{equation} \label{Eq:PencilRank} \operatorname{rk} \mathcal{P}=\max_{x \in M, \lambda \in \bar{\mathbb{K}}} \left( \mathcal{A}(x) + \lambda \mathcal{B}(x) \right). \end{equation}  In practice, we often get a \textbf{complete} set of functions, meaning  that we get \begin{equation} \label{Eq:BiIntNumber} N = \dim M - \frac{1}{2}  \operatorname{rk} \mathcal{P} \end{equation} functionally independent integrals $f_1, \dots, f_N$ (see e.g. \cite{BolsZhang}, \cite{Izosimov14},  \cite{Kozlov23Shift} and the references therein). Functional Independence means that $df_1 \wedge \dots \wedge df_N \not = 0$ almost everywhere.

\begin{definition} A bi-Hamiltonian system \eqref{Eq:BiHam1} on a manifold $M$ is \textbf{locally bi-integrable} if  in a neighborhood of a generic point $x \in M$ there exist a complete set of integrals in bi-involution.  \end{definition}

In this paper, we provide answers to the long-standing questions \cite[Problem 9]{BolsinovIzosimomKonyaevOshemkov12} and \cite[Problem 12]{BolsinovTsonev17}  which inquire about local bi-integrability of bi-Hamiltonian systems. Our main result is the following.

\begin{theorem} \label{T:MultHamBiInt} Let $\mathcal{P} = \left\{ \mathcal{A} + \lambda \mathcal{B}\right\}$ be a Poisson pencil on a  real $C^{\infty}$-smooth or complex analytic manifold $M$. In the real case, we assume that all eigenvalues of $\mathcal{P}$ are real. If a vector field $v$ is bi-Hamiltonian w.r.t. $\mathcal{P}$, then it is locally bi-integrable. \end{theorem}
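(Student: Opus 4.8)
The plan is to combine the local algebraic structure of the pencil, furnished by the Jordan--Kronecker theorem, with a reduction that strips the Jordan part one layer at a time. Near a generic point $x_0$ the pair $\left(\mathcal{A}(x_0),\mathcal{B}(x_0)\right)$ splits into Kronecker blocks of odd sizes $2k_i+1$ (with $i=1,\dots,s$) and Jordan blocks of even sizes $2m_j$ attached to eigenvalues $\lambda_j$. A direct count gives $\dim M = \operatorname{rk}\mathcal{P} + s$, hence the target number of integrals is $N = \tfrac12\operatorname{rk}\mathcal{P} + s = s + \sum_i k_i + \sum_j m_j$. First I would record that the Casimir functions of the brackets of the pencil account for the Kronecker contribution $s + \sum_i k_i$, while the eigenvalues $\lambda_j(x)$ supply at most one function per eigenvalue; all of these are integrals of $v$ (Casimirs of any $\mathcal{A}_\lambda$ and eigenvalues of a bi-Hamiltonian system, as recalled in the Introduction) and are pairwise in bi-involution by Lemma~\ref{L:StandIntBiInv}. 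The entire deficit is thus concentrated in the Jordan part, where a block of size $2m_j$ demands $m_j$ integrals but the eigenvalue furnishes only one.

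The heart of the argument is bi-Poisson reduction. Since the eigenvalues are in bi-involution and generically functionally independent, I would restrict to a common level set $L = \{\lambda_j = c_j\}$, quotient by the associated characteristic distribution, and show that the pencil $\mathcal{P}$ descends to a Poisson pencil $\bar{\mathcal{P}}$ on the reduced space $\bar L$ whose Jordan blocks are strictly smaller, a block of size $2m_j$ giving rise to one of size $2(m_j-1)$ while the Kronecker type is unchanged. Bi-Hamiltonicity enters precisely here: because $\lambda_j$ are integrals of $v$, the field is tangent to $L$, and differentiating $v = \mathcal{A}_\lambda dH_\lambda$ in $\lambda$ yields the Lenard--Magri relation $\mathcal{B}\,dH_\lambda + \mathcal{A}_\lambda\,dH_\lambda' = 0$, so the reduced Hamiltonians $\bar H_\lambda$ make the reduced field $\bar v$ bi-Hamiltonian with respect to $\bar{\mathcal{P}}$. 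Inducting on the total Jordan half-size $\sum_j m_j$, the reduced system is locally bi-integrable; pulling the reduced integrals back to $L$, extending them off $L$ as functions constant along the reduction, and adjoining the eigenvalues and Kronecker Casimirs produces $N$ functionally independent integrals of $v$.

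The base case is the purely Kronecker pencil $\sum_j m_j = 0$: here there are no eigenvalues and the Casimirs of the brackets already form a complete set of $N = s + \sum_i k_i$ integrals in bi-involution under the standard genericity hypotheses, so the claim reduces to the classical statement recalled in the Introduction. The hypothesis that all eigenvalues are real in the smooth case is used exactly to ensure that the level sets $\{\lambda_j = c_j\}$ and the reduction are carried out over $\mathbb{R}$ in the same way as over $\mathbb{C}$; complex-conjugate eigenvalue pairs would force the reduction into the complexification and spoil the dimension count, which is why they are excluded.

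The step I expect to be the main obstacle is establishing that $\mathcal{P}$ genuinely descends to a well-defined Poisson pencil on the quotient with the predicted Jordan--Kronecker type, and that bi-involution survives the pull-back and extension. One must verify that the reduced brackets remain Poisson and compatible, that each size-$2m_j$ block drops to $2(m_j-1)$ rather than degenerating in some other way (in particular when an eigenvalue is shared by several blocks), and that the functions lifted from $\bar L$ commute with the eigenvalues and Casimirs with respect to \emph{every} $\mathcal{A}_\lambda$, and not merely along $L$. Controlling these bi-involution relations off the level set, uniformly in $\lambda$, is the technical crux: Lemma~\ref{L:StandIntBiInv} governs the standard integrals, but the interaction between the lifted integrals and the standard ones must be read off from the explicit local normal form of the Jordan block.
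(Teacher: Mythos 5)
Your overall strategy --- handle the Kronecker part by Casimirs and peel off the Jordan part by repeated bi-Poisson reduction --- is the same as the paper's, but the engine of your induction is the wrong one, and it genuinely breaks. You reduce along common level sets of the eigenvalues $\lambda_j$, and your induction on $\sum_j m_j$ needs this reduction to strictly decrease the Jordan size at every stage. It does not. First, your assumption that the eigenvalues are ``generically functionally independent'' is not granted by the hypotheses: the theorem allows pencils with \emph{constant} eigenvalues and nontrivial Jordan blocks (flat pencils are the extreme case), and for such a pencil the level set $\{\lambda_j = c_j\}$ is all of $M$, the characteristic distribution you quotient by is zero, and your reduction is the identity, so no progress is made. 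Second, even when the original eigenvalues are independent, your own first reduction step manufactures exactly this degenerate situation: on the reduced space the eigenvalues are the constants $c_j$, while Jordan blocks of size $2(m_j-1)$ survive whenever $m_j \geq 2$. So at the second step you invoke the induction hypothesis for a class of systems that your inductive step cannot handle, and the recursion stalls.

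The paper closes precisely this hole by changing which functions are adjoined. After bi-Poisson reduction with respect to the standard integrals (Casimirs of regular brackets, eigenvalues, \emph{and} the Hamiltonians $H_\alpha$), the differentials $d\lambda_j$ lie in the core distribution, so each $\lambda_j$ becomes a Casimir of both $\mathcal{A}$ and $\mathcal{B}$; by Assertion~\ref{A:NewPoissonCasimir} the \emph{singular} bracket $\mathcal{A} - \lambda_j(x)\mathcal{B}$ (with a function, not a number, as coefficient) is then a genuine Poisson bracket, and one adjoins a local Casimir of it, i.e.\ a solution of $\left(\mathcal{A} - \lambda_j(x)\mathcal{B}\right)dg = 0$. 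This works whether or not $\lambda_j$ is constant; each such $g$ peels one layer off one Jordan block, and Assertion~\ref{A:FuncG} (choosing $dg$ in the \emph{smallest} Jordan $\lambda_j$-block) keeps the point JK-regular so the process can be iterated to completion --- this also resolves the ``several Jordan blocks with the same eigenvalue'' difficulty that you flag but leave open. Two further divergences: (i) you derive bi-Hamiltonicity of the reduced field from a Lenard--Magri relation obtained by differentiating $v = \mathcal{A}_\lambda dH_\lambda$ in $\lambda$, which presumes $H_\lambda$ depends differentiably on $\lambda$, whereas the hypotheses only give existence of some $H_\lambda$ for each fixed $\lambda$; (ii) the paper never needs a reduced vector field at all: it keeps the Hamiltonians inside the commuting family from the start (they are needed for admissibility of $d\mathcal{F}$, Lemma~\ref{L:StandIntAdm}, and they force every function of the final complete family to be an integral of $v$, Assertion~\ref{A:FirstInt}), and your ``technical crux'' about bi-involution of lifted integrals is exactly what the bi-Lagrangian correspondence of Theorem~\ref{T:BiPoissReduction} and the normal form of Theorem~\ref{T:BiPoissRed} are designed to dispose of.
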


We prove a more general Theorem~\ref{T:MainGenTh} in Section~\ref{S:MainTh}. This theorem specifies the points $x\in M$ where the integrals are functionally independent and describes the standard integrals utilized in the analysis.

The term ``bi-Hamiltonian'' is often used to refer to systems that are Hamiltonian w.r.t. two Poisson brackets, which can be expressed as \begin{equation} \label{Eq:BiHamOnly2} v = \mathcal{A} df = \mathcal{B} dg.\end{equation} The following statement reveals the primary cause  why not all systems~\eqref{Eq:BiHamOnly2} are locally bi-integrable.

\begin{lemma} \label{L:FirstMainT} Let $\mathcal{A}$ and $\mathcal{B}$ be compatible Poisson brackets on $M$. If a vector field  $v = \mathcal{A} df = \mathcal{B} dg$ is locally bi-integrable, that it is tangent to the symplectic leaves $\operatorname{Im} \mathcal{A}_{\lambda}$ for a generic pair $(\lambda, x) \in \bar{\mathbb{C}} \times M$: \begin{equation} \label{Eq:ImCond} v(x) \in \operatorname{Im} \mathcal{A}_{\lambda}(x).\end{equation} 
\end{lemma}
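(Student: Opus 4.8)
The plan is to translate local bi-integrability into linear algebra at a generic point and then run a dimension count in the Jordan--Kronecker normal form of the pencil. Fix a generic $x$ and let $f_1,\dots,f_N$, with $N=\dim M-\tfrac12\operatorname{rk}\mathcal P$, be the complete family of first integrals in bi-involution guaranteed by the hypothesis. Set $L=\operatorname{span}\bigl(df_1(x),\dots,df_N(x)\bigr)\subset T_x^*M$. At a generic point the differentials are independent, so $\dim L=N$. Bi-involution $\mathcal A_\lambda(df_i,df_j)=0$ says exactly that $L$ is isotropic with respect to every form of the pencil, i.e. $\mathcal A_\lambda(L)\subseteq\operatorname{Ann}L$, where $\operatorname{Ann}L\subset T_xM$ is the annihilator; since $n-\tfrac12\operatorname{rk}\mathcal P$ is the maximal possible dimension of a subspace isotropic w.r.t. all $\mathcal A_\lambda$ simultaneously, $L$ is a \emph{maximal} bi-isotropic subspace. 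Finally, as the $f_i$ are first integrals of $v$ we have $df_i(v)=0$, that is $v(x)\in\operatorname{Ann}L$ (the assumption $v=\mathcal Adf=\mathcal Bdg$ then only adds the consistent information $v\in\operatorname{Im}\mathcal A\cap\operatorname{Im}\mathcal B$).

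The conclusion $v(x)\in\operatorname{Im}\mathcal A_\lambda(x)$ will follow once I establish the dual inclusion $\operatorname{Ann}L\subseteq\operatorname{Im}\mathcal A_\lambda(x)$ for generic $\lambda$. By skew-symmetry $\operatorname{Im}\mathcal A_\lambda=\operatorname{Ann}\bigl(\ker\mathcal A_\lambda\bigr)$, so taking annihilators in the duality between $T_xM$ and $T_x^*M$ this is equivalent to $\ker\mathcal A_\lambda(x)\subseteq L$. Everything therefore reduces to a statement of pure linear algebra for a pencil of skew-symmetric forms: a maximal bi-isotropic subspace contains the kernel $\ker\mathcal A_\lambda$ of every regular form, i.e. for every $\lambda\in\bar{\mathbb C}$ that is not an eigenvalue of the pencil.

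To prove this I would pass to the Jordan--Kronecker normal form of the pair $\bigl(\mathcal A(x),\mathcal B(x)\bigr)$ on $T_x^*M$. For a regular $\lambda$ the kernel $\ker\mathcal A_\lambda$ meets only the Kronecker blocks, contributing one line per block, so it suffices to analyze a single Kronecker block $K_{2k+1}$. There the maximal bi-isotropic subspace has dimension $k+1$ and, as one checks directly from the normal form, it is \emph{unique}: it coincides with the kernel-span $\sum_\mu\ker\mathcal A_\mu$ and hence contains each individual $\ker\mathcal A_\lambda$. Patching blocks together, using that distinct Jordan--Kronecker blocks are mutually orthogonal w.r.t. both $\mathcal A$ and $\mathcal B$, should yield $\sum_\mu\ker\mathcal A_\mu\subseteq L$, and in particular $\ker\mathcal A_\lambda\subseteq L$ for all regular $\lambda$.

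The main obstacle is precisely this last patching step: on a single block the maximal bi-isotropic subspace is forced to be the kernel-span, but an abstract maximal bi-isotropic $L$ need not be block-diagonal, so I must rule out ``diagonal'' subspaces that trade a Kronecker kernel direction for vectors lying in other blocks. I expect to handle it by an extension argument: if some $\alpha\in\ker\mathcal A_\lambda\setminus L$ existed, I would enlarge $L$ to a strictly larger bi-isotropic subspace built from $\alpha$ together with $L\cap\operatorname{Ann}(\mathcal B\alpha)$, contradicting maximality. The delicate case is when $\mathcal B\alpha$ does not annihilate $L$, and this is where genericity of $x$ --- the constancy of the Jordan--Kronecker type, so that $L$ really attains the full dimension $n-\tfrac12\operatorname{rk}\mathcal P$ --- has to be used.
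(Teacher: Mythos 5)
Your reduction is exactly the one the paper uses: at a generic point $x$ the span $L$ of the differentials of the integrals is bi-Lagrangian (Assertion~\ref{A:BiLagr}), the vector $v(x)$ lies in its annihilator $L^{0}$, and \eqref{Eq:ImCond} becomes the purely linear-algebraic claim that $\operatorname{Ker}\mathcal{A}_{\lambda}(x)\subseteq L$ for every regular $\lambda$, i.e.\ that a bi-Lagrangian subspace contains the core. The paper disposes of this claim by citation (Assertion~\ref{A:DualCoreLagr}, resting on \cite[Lemma~3.2]{Kozlov24BiLagr}), whereas you try to prove it from scratch, and that is where your argument has a genuine gap. As you yourself note, the block-by-block uniqueness statement does not patch together, since an abstract bi-Lagrangian subspace need not be adapted to the Jordan--Kronecker decomposition; and the extension argument you sketch to repair this does not close. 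If $\alpha\in\operatorname{Ker}\mathcal{A}_{\lambda}\setminus L$ and $\mathcal{B}\alpha$ does not annihilate $L$, then the bi-isotropic subspace $\operatorname{span}\left\{\alpha\right\}+\left(L\cap\operatorname{Ann}(\mathcal{B}\alpha)\right)$ has dimension at most $\dim L$, so it produces no contradiction with maximality. Moreover, genericity of $x$ cannot rescue this step: the claim is a pointwise statement about a single linear pencil, true for \emph{every} bi-Lagrangian subspace of \emph{every} pencil, so constancy of the JK type is irrelevant to it.

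The gap is real but easy to close, and the fix needs neither the JK normal form nor bi-isotropy of the enlargement. Write $n=\dim M$ and $2k=\operatorname{rk}\mathcal{P}(x)$. For a single skew-symmetric form of rank $2k$ on an $n$-dimensional space, the maximal dimension of an isotropic subspace is $(n-2k)+k=n-k$. Since $\dim L=n-\tfrac{1}{2}\operatorname{rk}\mathcal{P}=n-k$ and $L$ is in particular isotropic with respect to the single regular form $\mathcal{A}_{\lambda}(x)$, the subspace $L$ is already a \emph{maximal} (Lagrangian) isotropic subspace for $\mathcal{A}_{\lambda}(x)$ alone. Now for any $\alpha\in\operatorname{Ker}\mathcal{A}_{\lambda}(x)$ the subspace $L+\operatorname{span}\left\{\alpha\right\}$ is still $\mathcal{A}_{\lambda}$-isotropic --- one does not care whether it stays isotropic for the other forms of the pencil --- so maximality of $\dim L$ forces $\alpha\in L$. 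Hence $\operatorname{Ker}\mathcal{A}_{\lambda}(x)\subseteq L$ for every regular $\lambda$, which is exactly what your duality step requires; with this one observation inserted, the rest of your argument goes through.
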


\begin{proof}[Proof of Lemma~\ref{L:FirstMainT}] Let $f_1, \dots, f_m$ be a complete family of integrals in bi-involution on $M$. By Assertion~\ref{A:BiLagr} the subspace \[ L_x = \operatorname{span} \left\{ df_1(x), \dots, df_m (x)\right\}\] is a bi-Lagrangian subspace in $T^*_x M$ for a generic $x \in M$. Then $v(x) \in L_x^{0}$ and by Assertion~\ref{A:DualCoreLagr} Condition~\eqref{Eq:ImCond} holds for generic $(\lambda, x) \in \bar{\mathbb{C}} \times M$. Lemma~\ref{L:FirstMainT} is proved. \end{proof}

\begin{example} Condition \eqref{Eq:ImCond} is not satisfied for the following bi-Hamiltonian system \[\mathcal{A} =  \left(
\begin{array}{c|c}
  0 & \begin{matrix}
   1 & 0      &             \\
      & 1 & 0     \\
    \end{matrix} \\
  \hline
  \begin{matrix}
  \minus1  &           \\
  0   & \minus1    \\
      &         0  \\
  \end{matrix} & 0
 \end{array}
 \right) \quad  B=\left(\begin{array}{c|c}
  0 & \begin{matrix}
   0 & 1      &             \\
      & 0 & 1     \\
    \end{matrix} \\
  \hline
  \begin{matrix}
   0 &           \\
  \minus1    &   0 \\
      &         \minus1   \\
  \end{matrix} & 0
 \end{array}
 \right), \quad v = \left(\begin{matrix} 0 \\ 0 \\ \hline 0 \\ 1 \\ 0  \end{matrix} \right) \] and hence it is not (locally) bi-integrable. \end{example}

\begin{remark} The results of this paper can be generalized:

\begin{enumerate}

\item Theorem~\ref{T:MultHamBiInt} remains true in the general real $C^{\infty}$-case. if the Poisson pencil $\mathcal{P}$ possesses complex-conjugate eigenvalues $\alpha_j \pm i \beta_j$, the proof requires modification. The key step is to perform bi-Poisson reduction w.r.t. the standard integrals. Subsequently, we can utilize the results obtained by F.-J. Turiel in~\cite{turiel} regarding flat Poisson pencils.

\item Lemma~\ref{L:FirstMainT} can be extended to serve as a criterion for local bi-integrability. A bi-Hamiltonian system   $v = \mathcal{A} df = \mathcal{B} dg$ is locally bi-integrable if only if \eqref{Eq:ImCond} holds for generic $(\lambda, x) \in \bar{\mathbb{C}} \times M$. The proofs can be adapted to accommodate scenarios where the system is Hamiltonian not for all Poisson brackets $\mathcal{A}_{\lambda}, \lambda \in \bar{\mathbb{K}}$ but only for the values $\lambda$ belonging to an open subset $U \subset \bar{\mathbb{K}}$. 

\end{enumerate}

To preserve the simplicity of the proof, we focused on the most basic case, where the eigenvalues of $\mathcal{P}$ belong to the underlying field and the system is Hamiltonian w.r.t. all brackets $\mathcal{A}_{\lambda}$.

\end{remark}

\subsection{Conventions and acknowledgements}

The results of this paper are more or less expected by the experts in the field. The author was told by A.\,M.Izosimov \cite{IzosimovPriv}  that in his correspondence with A.\,V.~Bolsinov they discussed the proof of local bi-integrability using similar technique. 

\textbf{Conventions.} All manifolds (functions, Poisson brackets, etc)  are  either real $C^{\infty}$-smooth or complex analytic. Some property holds ``almost everywhere'' or ``at a generic point'' of a manifold $M$ if it holds on an open dense subset of $M$.  We denote $\bar{\mathbb{K}} = \mathbb{K} \cup \left\{ \infty \right\}$, where $\mathbb{K} = \mathbb{R}$ or $\mathbb{C}$ is the underlying field. The annihilator of a vector subspace $U$ is denoted by $U^{0}$. Sometimes we abbreviate ``Jordan--Kronecker'' as JK and ``Kronecker Canonical Form'' as KCF. We refer to the splitting theorem of Alan Weinstein (see  e.g. \cite[Theorem 4.2]{SilvaWeinstein99} or \cite[Theorem 1.4.5]{DufourZung05}), as the Darboux--Weinstein theorem.

\par\medskip

\textbf{Acknowledgements.} The author would like to thank A.\,V.~Bolsinov, A.\,M.~Izosimov and A.\,Yu.~Konyaev for useful comments.

\section{Linear algebra}

In this section we present some basic facts related to the geometry of a finite-dimensional complex vector space $V$ equipped with a pair of skew-symmetric bilinear forms $A, B$. We call a one-parametric family of skew-symmetric forms \[\mathcal{L} = \left\{A + \lambda B \, \, \bigr| \,\, \lambda \in \bar{\mathbb{C}} \right\}\] a \textbf{linear pencil}. The statements presented in this section are rooted in elementary linear algebra, and therefore, we will omit the formal proofs.

\subsection{Jordan--Kronecker theorem and Kronecker canonical form} 

First, let us recall the \textbf{Kronecker Canonical Form} (KCF) for a pair of skew-symmetric forms. The next theorem that describes it, which we call the Jordan--Kronecker theorem, is a classical result that goes back to Weierstrass and Kronecker.  A proof of it can be found in \cite{Thompson}, which is based on \cite{Gantmaher88}.

\begin{theorem}[Jordan--Kronecker theorem]\label{T:Jordan-Kronecker_theorem}
Let $A$ and $B$ be skew-symmetric bilinear forms on a complex
finite-dimension vector space $V$. There exists a basis of the space $V$
such that the matrices of both forms $A$ and $B$ are block-diagonal
matrices:

{\footnotesize
$$
A =
\begin{pmatrix}
A_1 &     &        &      \\
    & A_2 &        &      \\
    &     & \ddots &      \\
    &     &        & A_k  \\
\end{pmatrix}
\quad  B=
\begin{pmatrix}
B_1 &     &        &      \\
    & B_2 &        &      \\
    &     & \ddots &      \\
    &     &        & B_k  \\
\end{pmatrix}
$$
}

where each pair of corresponding blocks $A_i$ and $B_i$ is one of
the following:

\begin{itemize}

\item Jordan block with eigenvalue $\lambda_i \in \mathbb{C}$: {\scriptsize  \begin{equation} \label{Eq:JordBlockL} A_i =\left(
\begin{array}{c|c}
  0 & \begin{matrix}
   \lambda_i &1&        & \\
      & \lambda_i & \ddots &     \\
      &        & \ddots & 1  \\
      &        &        & \lambda_i   \\
    \end{matrix} \\
  \hline
  \begin{matrix}
  \minus\lambda_i  &        &   & \\
  \minus1   & \minus\lambda_i &     &\\
      & \ddots & \ddots &  \\
      &        & \minus1   & \minus\lambda_i \\
  \end{matrix} & 0
 \end{array}
 \right)
\quad  B_i= \left(
\begin{array}{c|c}
  0 & \begin{matrix}
    1 & &        & \\
      & 1 &  &     \\
      &        & \ddots &   \\
      &        &        & 1   \\
    \end{matrix} \\
  \hline
  \begin{matrix}
  \minus1  &        &   & \\
     & \minus1 &     &\\
      &  & \ddots &  \\
      &        &    & \minus1 \\
  \end{matrix} & 0
 \end{array}
 \right)
\end{equation}} \item Jordan block with eigenvalue $\infty$ {\scriptsize \begin{equation} \label{Eq:JordBlockInf}
A_i = \left(
\begin{array}{c|c}
  0 & \begin{matrix}
   1 & &        & \\
      &1 &  &     \\
      &        & \ddots &   \\
      &        &        & 1   \\
    \end{matrix} \\
  \hline
  \begin{matrix}
  \minus1  &        &   & \\
     & \minus1 &     &\\
      &  & \ddots &  \\
      &        &    & \minus1 \\
  \end{matrix} & 0
 \end{array}
 \right)
\quad B_i = \left(
\begin{array}{c|c}
  0 & \begin{matrix}
    0 & 1&        & \\
      & 0 & \ddots &     \\
      &        & \ddots & 1  \\
      &        &        & 0   \\
    \end{matrix} \\
  \hline
  \begin{matrix}
  0  &        &   & \\
  \minus1   & 0 &     &\\
      & \ddots & \ddots &  \\
      &        & \minus1   & 0 \\
  \end{matrix} & 0
 \end{array}
 \right)
 \end{equation} } \item   Kronecker block {\scriptsize \begin{equation} \label{Eq:KronBlock} A_i = \left(
\begin{array}{c|c}
  0 & \begin{matrix}
   1 & 0      &        &     \\
      & \ddots & \ddots &     \\
      &        & 1    &  0  \\
    \end{matrix} \\
  \hline
  \begin{matrix}
  \minus1  &        &    \\
  0   & \ddots &    \\
      & \ddots & \minus1 \\
      &        & 0  \\
  \end{matrix} & 0
 \end{array}
 \right) \quad  B_i= \left(
\begin{array}{c|c}
  0 & \begin{matrix}
    0 & 1      &        &     \\
      & \ddots & \ddots &     \\
      &        &   0    & 1  \\
    \end{matrix} \\
  \hline
  \begin{matrix}
  0  &        &    \\
  \minus1   & \ddots &    \\
      & \ddots & 0 \\
      &        & \minus1  \\
  \end{matrix} & 0
 \end{array}
 \right)
 \end{equation} }
 \end{itemize}

\end{theorem}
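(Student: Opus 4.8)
The plan is to establish Theorem~\ref{T:Jordan-Kronecker_theorem} by induction on $\dim V$, at each step splitting off one indecomposable summand $U \subseteq V$ that is simultaneously $A$-orthogonal and $B$-orthogonal to a complement $W$, so that $A,B$ restrict to $U$ as one of the three listed block types and the induction continues on $(W, A|_W, B|_W)$. Throughout I would use that $\mathbb{C}$ is algebraically closed, so that the characteristic polynomial of any operator splits, and the $A \leftrightarrow B$ symmetry of the pencil: substituting $\lambda = 1/\mu$ interchanges the roles of $A$ and $B$ and sends the eigenvalue $\infty$ to $0$. Hence it suffices to produce the finite Jordan blocks~\eqref{Eq:JordBlockL} and the Kronecker blocks~\eqref{Eq:KronBlock}, recovering the $\infty$-block~\eqref{Eq:JordBlockInf} from a finite-eigenvalue block by this substitution.

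First I would separate the singular part. Set $K = \ker A \cap \ker B$; any $v \in K$ spans a one-dimensional summand orthogonal to all of $V$ under both forms, i.e. a trivial ($1\times 1$) Kronecker block, so I may assume $K = 0$. Let $r = \operatorname{rk}\mathcal{L} = \max_\lambda \operatorname{rank}(A + \lambda B)$; the pencil is \emph{regular} when $r = \dim V$ (equivalently $\det(A+\lambda B) \not\equiv 0$) and \emph{singular} otherwise. In the singular case there is a nonzero polynomial solution $x(\lambda) = x_0 + x_1\lambda + \dots + x_p\lambda^p$ of $(A + \lambda B)x(\lambda) = 0$; choosing one of minimal degree $p$ yields a chain $x_0,\dots,x_p$ which, together with suitable partner vectors, spans a $(2p+1)$-dimensional subspace $U$ on which $A,B$ take precisely the Kronecker normal form~\eqref{Eq:KronBlock}. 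The crucial point is that $U$ admits a complement that is $A$- and $B$-orthogonal to it, a splitting forced by the minimality of $p$ together with the skew-symmetry of both forms. Iterating removes the entire Kronecker part.

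It then remains to treat the regular case, where some $A_{\lambda_*} = A + \lambda_* B$ is nondegenerate. After the substitution $\lambda \mapsto \lambda_* + \lambda$ I may assume $A$ itself is invertible and form the recursion operator $R = A^{-1}B$. A direct matrix computation gives $A(Rx,y) = A(x,Ry) = B(x,y)$, so $R$ is self-adjoint with respect to the symplectic form $A$. The classification of the pencil on the regular part is therefore equivalent to the \emph{symplectic} classification of a single self-adjoint operator: the generalized eigenspaces of $R$ are mutually $A$-orthogonal and $A$-nondegenerate, and on each of them the standard symplectic Jordan theory yields chains assembling into the Jordan blocks~\eqref{Eq:JordBlockL}, with the pencil eigenvalue $\lambda_i$ read off from the spectrum of $R$. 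Assembling the Kronecker summands, the finite Jordan summands, and (via the $A \leftrightarrow B$ symmetry) the $\infty$-Jordan summands gives the asserted block-diagonal form.

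The main obstacle is the singular (Kronecker) part: producing the minimal polynomial solution, checking that the $2p+1$ vectors it generates are independent and carry exactly the normal form~\eqref{Eq:KronBlock}, and—hardest of all—proving that this subspace splits off $A$- and $B$-orthogonally, so that the induction is well founded. The self-adjoint step for the regular part is comparatively routine once the symplectic Jordan theorem is in hand. An alternative route, followed in \cite{Thompson}, is to first invoke Kronecker's canonical form for general matrix pencils under strict equivalence and then upgrade strict equivalence to congruence using skew-symmetry; there the delicate step is instead this upgrade, which again pivots on the pairing of row and column minimal indices forced by skew-symmetry.
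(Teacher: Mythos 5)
The paper does not actually prove this theorem: it is stated as a classical result going back to Weierstrass and Kronecker, and the proof is delegated to \cite{Thompson}, which in turn builds on \cite{Gantmaher88}. So there is no internal proof to compare against, and your attempt can only be measured against that cited literature. Your primary route --- strip off the common kernel $\ker A\cap\ker B$, split off a minimal Kronecker chain whenever the pencil is singular, and in the regular case pass to the recursion operator $R=A^{-1}B$, which is self-adjoint for the symplectic form $A$, so that the classification reduces to symplectic Jordan theory --- is the other standard approach, closer in spirit to Gantmacher's treatment of singular pencils than to Thompson's strict-equivalence-then-congruence argument, which you correctly identify as the alternative. The individual claims you make are true: the identity $A(Rx,y)=A(x,Ry)=B(x,y)$ does follow from the two skew-symmetries, generalized eigenspaces of an $A$-self-adjoint operator are mutually $A$-orthogonal, and a minimal-degree polynomial solution of degree $p$ generates, with $p$ partner vectors, a $(2p+1)$-dimensional block matching \eqref{Eq:KronBlock} with $k_i=p+1$; the $\infty$-blocks \eqref{Eq:JordBlockInf} indeed come for free, either by the $A\leftrightarrow B$ swap or as the nilpotent part of $R$.

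Be clear, however, that what you wrote is a roadmap rather than a proof: the two steps you yourself single out as hard carry essentially the entire content of the theorem, and neither is carried out. First, the existence of a complement to the Kronecker subspace $U$ that is orthogonal to $U$ under \emph{both} forms is exactly what makes the induction well founded; saying it is ``forced by the minimality of $p$ together with skew-symmetry'' names the ingredients but proves nothing --- one must construct the partner vectors explicitly, use minimality of $p$ to show that the $2p+1$ vectors are linearly independent and that the two forms restricted to $U$ already have the shape \eqref{Eq:KronBlock}, and then show that an arbitrary complement can be corrected, by adding vectors of $U$, to one that is bi-orthogonal to $U$. Second, the ``standard symplectic Jordan theory'' you invoke for the regular part is precisely the regular case of the theorem being proved (the classification of pairs consisting of a complex symplectic form and a self-adjoint operator), so appealing to it is circular unless you either prove it --- pairing each Jordan chain of $R$ with a dual chain via the nondegeneracy of $A$ to build the $2m$-dimensional indecomposables of \eqref{Eq:JordBlockL} --- or cite it as an independent classical result, as the paper in effect does for the whole statement. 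With those two lemmas supplied, your outline assembles correctly into the asserted block-diagonal normal form.
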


Each Kronecker block is a $(2k_i-1) \times (2k_i-1)$ block, where
$k_i \in \mathbb{N}$. If $k_i=1$, then the blocks are $1\times 1$
zero matrices \[A_i =
\begin{pmatrix}
0
\end{pmatrix}, \qquad B_i=
\begin{pmatrix}
0
\end{pmatrix}.\]  We call a decomposition of $V$ into a sum of subspaces corresponding to the Jordan and Kronecker blocks a \textbf{Jordan-Kronecker decomposition}:  \begin{equation} \label{Eq:JKDecomp} V= \bigoplus_{j=1}^{N_J}  V_{J_{\lambda_j, 2n_j}}\oplus  \bigoplus_{i=1}^{N_K} V_{K_i}.\end{equation}

\subsection{Core and mantle subspaces}

The \textbf{rank} of a linear pencil $\mathcal{L} = \left\{ A + \lambda B\right\}$ is \[ \operatorname{rk} \mathcal{L} = \max_{\lambda \in\bar{\mathbb{C}}} \operatorname{rk} (A +\lambda B).\] A value $\lambda_0 \in \bar{\mathbb{C}}$ is \textbf{regular} if $\operatorname{rk} A_{\lambda_0} = \operatorname{rk} \mathcal{L}$. We also call $A_{\lambda_0}$ a regular form of the pencil $\mathcal{L}$. Non-regular values $\lambda_0$ are \textbf{singular}. We denote the set of eigenvalues as \[\Lambda(\mathcal{L}) = \left\{ \lambda_j \,\, \bigr| \,\, \operatorname{rk}(A - \lambda_j B) < \operatorname{rk} \mathcal{L}\right\}.\] Due to our sign convention in KCF, singular values are \underline{minus} eigenvalues, i.e. $- \Lambda(\mathcal{L})$.

\begin{definition} Consider a pencil of skew-symmetric forms $\left\{ A_{\lambda} = A + \lambda B\right\}$.

\begin{enumerate}

\item The \textbf{core} subspace is \[ K = \sum_{\lambda - regular} \operatorname{Ker} A_{\lambda}. \] 

\item The \textbf{mantle} subspace is the skew-orthogonal complement to the core (w.r.t. any regular form $A_{\mu}$) \[ M = K^{\perp}. \] 

\end{enumerate}

\end{definition} 

Now fix any basis from the JK theorem. Denote by $V_J$ and $V_K$ the sum of all Jordan and all Kronecker blocks respectively. Using the JK theorem it is easy to prove the following statement.
\begin{assertion} \label{A:CoreMantle} For any JK decomposition we have the following. 

\begin{enumerate}

\item  The core subspace K is spanned by vectors corresponding to the down-right zero matrices of Kronecker blocks, like this one:  \[ A_i + \lambda B_i = \left(
\begin{array}{c|c}
  0 & \begin{matrix}
   1 & \lambda      &        &     \\
      & \ddots & \ddots &     \\
      &        & 1    & \lambda  \\
    \end{matrix} \\
  \hline
  \begin{matrix}
  \minus1  &        &    \\
  \minus \lambda   & \ddots &    \\
      & \ddots & \minus1 \\
      &        & \minus\lambda  \\
  \end{matrix} &\cellcolor{blue!25} 0 
 \end{array}
 \right).
 \]

\item The mantle subspace is the core plus all Jordan blocks: \[ M = K \oplus V_J. \]

\end{enumerate}

\end{assertion}

\subsection{Admissible subspaces}

Let $\mathcal{L}= \left\{A_{\lambda} \right\}$ be a linear pencil on $V$. For a subspace $U\subset (V, \mathcal{L})$ we denote by $U^{\perp_{A_\lambda}}$ or $U^{\perp_{\lambda}}$ its skew-orthogonal complement w.r.t. the form $A_{\lambda}$: \[ U^{\perp_{\lambda}} = \left\{ v\in V \, \, \bigr| \, \, A_{\lambda}(v, U) = 0\right\}.\]

\begin{definition} A subspace $U \subset  (V, \mathcal{L})$ is \textbf{admissible} if its skew-orthogonal complements $U^{\perp_{A_\lambda}}$ coincide for almost all forms $A_\lambda$ of the pencil $\mathcal{L}$. We denote this complement as $U^{\perp_{\mathcal{L}}}$ or $U^{\perp}$. \end{definition}

Note that ``admissibility'' of $U$ does not depend on ``the choice of basis'' in the pencil $\mathcal{L}$.

\begin{assertion} \label{A:AsserBasChange} Let $U \subset V$ be an admissible space for a pencil $\mathcal{L} = \left\{ A + \lambda B\right\}$. Then for any linearly independent forms $A', B' \in \operatorname{span}\left\{A, B\right\}$ the subspace $U$ is also admissible w.r.t. the pencil $\mathcal{L}' =\left\{A' + \lambda B' \right\}$.
\end{assertion}

In the Jordan case (i.e. when the regular forms are nondegenerate) admissible subspaces $U$ have a simple description. Simply speaking, ``admissible = $P$-invariant''.

\begin{assertion}[{\cite[Assertion 2.4]{Kozlov24BiLagr}}] \label{A:AdmChains}
Let $\mathcal{L} = \left\{ A + \lambda B\right\}$ be a linear pencil on $V$, $B$ be a regular form and $P = B^{-1}A$ be the recursion operator. A subspace $U \subset (V, \mathcal{L})$ is admissible if and only if $U$ is $P$-invariant.
\end{assertion}

 We also have the following statements about any bi-Poisson linear space $(V, \mathcal{L})$. 
 
\begin{assertion} \label{A:SumAdm}  Any sum of admissible subspaces $\oplus_{\alpha} U_{\alpha}$ is also admissible.

\end{assertion}

The next statement can be proved using the JK theorem.

\begin{lemma} \label{L:AdmInMantle} The core subspace $K$ and the mantle subspace $M$ are admissible. The mantle subspace $M$ is the maximal admissible subspace, i.e. any other admissible space $U \subset M$. \end{lemma}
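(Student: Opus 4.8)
The plan is to fix a Jordan--Kronecker decomposition $V = \bigoplus_i V_i$ provided by Theorem~\ref{T:Jordan-Kronecker_theorem} and to exploit that every form $A_\lambda$ of the pencil is block-diagonal with respect to it, so that distinct summands $V_i$ and $V_j$ are $A_\lambda$-skew-orthogonal for all $\lambda$. The first thing I would record is that for any subspace respecting the decomposition, $U = \bigoplus_i (U \cap V_i)$, the skew-orthogonal complement splits as $U^{\perp_\lambda} = \bigoplus_i (U \cap V_i)^{\perp_\lambda}$, where each complement on the right is taken inside $V_i$. Hence $U$ is admissible as soon as each $U \cap V_i$ is admissible for the restricted pencil $\mathcal{L}|_{V_i}$, and the passage from the blocks to the whole space is exactly Assertion~\ref{A:SumAdm}. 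By Assertion~\ref{A:CoreMantle} both $K$ and $M = K \oplus V_J$ respect the decomposition, so it suffices to treat a single Jordan block and a single Kronecker block.

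Within a Jordan block the regular forms are nondegenerate, so by Assertion~\ref{A:AdmChains} admissibility coincides with $P$-invariance; the whole block is trivially $P$-invariant, hence admissible, and this is precisely the part of $M$ living in that block. Within a Kronecker block I would compute directly in the canonical coordinates $e_1, \dots, e_{k-1}, f_1, \dots, f_k$ of \eqref{Eq:KronBlock}. The vectors $f_1, \dots, f_k$ span the core part $K \cap V_{K_i}$ by Assertion~\ref{A:CoreMantle} and are mutually $A_\lambda$-skew-orthogonal for every $\lambda$; pairing a general vector against each $f_b$ yields the triangular system $c_b + \lambda\, c_{b-1} = 0$ (with $c_0 = c_k = 0$) on its $e$-coefficients $c_a$, which forces all $c_a = 0$. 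Thus $(K \cap V_{K_i})^{\perp_\lambda} = \operatorname{span}\{f_1, \dots, f_k\} = K \cap V_{K_i}$ independently of $\lambda$. Combining the blocks, $K$ and $M$ are admissible, and along the way the defining equality $M = K^\perp$ becomes well posed because $K^{\perp_\mu}$ no longer depends on the regular form $A_\mu$.

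For maximality I would argue intrinsically. Let $U$ be any admissible subspace and let $U^\perp = U^{\perp_\mu}$ hold for all $\mu$ in the cofinite set $G$ of values on which the complements coincide. Since $\operatorname{Ker} A_\mu \subseteq U^{\perp_\mu}$ for every $\mu$, we obtain $\operatorname{Ker} A_\mu \subseteq U^\perp$ for every regular $\mu \in G$. The regular values form a cofinite set, so $G$ still contains infinitely many of them, and a Vandermonde argument in each Kronecker block (the one-dimensional kernels, spanned by $\sum_{b=1}^{k} (-\mu)^{k-b} f_b$, sweep out all of $\operatorname{span}\{f_1, \dots, f_k\}$ as $\mu$ ranges over infinitely many values) gives $\sum_{\mu \in G,\ \mu \text{ regular}} \operatorname{Ker} A_\mu = K$. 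Therefore $K \subseteq U^\perp$, i.e. $A_{\mu_0}(K, U) = 0$ for a fixed regular $\mu_0 \in G$; by skew-symmetry $A_{\mu_0}(U, K) = 0$, so $U \subseteq K^{\perp_{\mu_0}} = M$, proving that every admissible subspace sits inside the mantle.

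I expect the only genuinely delicate point to be the maximality step, and within it the claim that the core is already generated by the kernels at the \emph{generic} (cofinite) set of regular values rather than at all of them; everything else is bookkeeping with the canonical form. Once that is secured, the skew-symmetry observation $K \subseteq U^\perp \Rightarrow U \subseteq K^\perp = M$ closes the argument cleanly, and the admissibility of $K$ and $M$ follows from the block computation together with Assertion~\ref{A:SumAdm}.
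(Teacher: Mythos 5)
Your proof is correct and takes essentially the approach the paper intends: the paper omits a formal argument, noting only that the lemma ``can be proved using the JK theorem,'' and your block-by-block verification in the Jordan--Kronecker canonical form (splitting of skew-orthogonal complements across blocks, the triangular computation inside a Kronecker block, and the Vandermonde sweep of kernels at regular values) is a sound realization of exactly that plan. The maximality step via $K \subseteq U^{\perp} \Rightarrow U \subseteq K^{\perp_{\mu_0}} = M$ is also valid, so there is nothing to fix.
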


By Lemma~\ref{L:AdmInMantle}  and Assertion~\ref{A:AdmChains} a subspace $U$ between the core and the mantle \[ K \subset U \subset M \]
is admissible if and only if $U/K$ is $P$-invariant in $M/K$ (where $P$ is the induced recursion operator). Eigenvectors of $P$ span a $P$-invariant subspace. Hence, we get the following simple statement that we need below.

\begin{assertion} \label{A:CommutLinear} Let $\mu_1, \dots, \mu_D \in \mathbb{C} \cup \left\{\infty\right\}, D \in \mathbb{N}$ be any distinct values,  $v_i \in \operatorname{Ker} (A + \mu_i B), i=1, \dots, D$ be any vectors. The subspace \[ U = K + \operatorname{span}\left\{ v_1, \dots, v_D \right\},\] where $K$ is the core subspace, is bi-isotropic and admissible.
\end{assertion}

\subsection{Bi-Lagrangian subspaces}

Bi-Lagrangian subspaces were extensively studied in   \cite{Kozlov24BiLagr}.

\begin{definition} A subspace $U \subset V$ of a bi-Poisson vector space $(V, \mathcal{L})$ is called 

\begin{itemize}

\item \textbf{bi-isotropic} if $A_{\lambda}(u, v) = 0$ for all $u, v\in V$ and all $A_{\lambda} \in \mathcal{L}$;

\item  \textbf{bi-Lagrangian} if it is bi-isotropic and $\dim U = \dim V - \frac{1}{2} \operatorname{rk} \mathcal{L}$.

\end{itemize}

 \end{definition}

\begin{assertion}[{\cite[Lemma 3.2]{Kozlov24BiLagr}}] Any bi-Lagrangian subspace $L \subset (V,\mathcal{L})$ contains the core subspace K
and is contained in the mantle subspace $M$: \[ K \subset L \subset M.\] \end{assertion}

Below we need the following statement that easily follows from the Jordan--Kronecker theorem.

\begin{assertion} \label{A:DualCoreLagr} Let $(V, \mathcal{L})$ be a linear bi-Poisson space with the core subspace $K$ and the mantle subspace $M$. 

\begin{enumerate}

\item The annihilator of the core subspace is $\displaystyle K^0 = \bigcap_{\lambda \text{ - reg.}} \operatorname{Im} A_{\lambda}$. 

\item For any bi-Lagrangian subspace $L$, since $K \subset L$, we have $\displaystyle L^0 \subset \bigcap_{\lambda \text{ - reg.}} \operatorname{Im} A_{\lambda}$.

\item For any $\alpha \in \bar{\mathbb{C}}$ we have $\displaystyle A_{\alpha}^{-1}(K^0) \subset M$.

\end{enumerate}

\end{assertion}

\section{Poisson pencils}

In this section we introduce some essential definitions and notions associated with Poisson pencils. Let $\mathcal{P} = \left\{ \mathcal{A}_{\lambda} = \mathcal{A} + \lambda \mathcal{B}\right\}$  be a Poisson pencil on $M$. The \textbf{rank} of $\mathcal{P}$ is given by \eqref{Eq:PencilRank}. Similarly, the rank of $\mathcal{P}$ at a point $x \in M$ is \[  \operatorname{rk} \mathcal{P}(x) = \max_{\lambda \in \bar{\mathbb{K}}} \operatorname{rk}\mathcal{A}_{\lambda}(x).\] A bracket $A_{\lambda} \in \mathcal{P}$ is \textbf{regular at a point $x$} if \[ \operatorname{rk} \mathcal{A}_{\lambda}(x) = \operatorname{rk} \mathcal{P}(x).\]  To exclude singularities of the pencil $\mathcal{P}$, we will consider the following points $x\in M$.  \begin{definition} \label{D:JKReg} A point $x_0 \in (M,\mathcal{P})$ is \textbf{JK-regular} if in a neighborhood of $Ox_0$ the pencils $\mathcal{P}(x)$ have the same Kronecker Canonical Form, up to the eigenvalues\footnote{Some authors say that $\mathcal{P}(x)$ belong to the same bundle or have the same algebraic type. ``Algebraic type'' and ``bundle of a linear pencil'' is roughly the same thing.}. \end{definition}

In other words, $x_0\in (M, \mathcal{P})$  is JK-regular if in a  neighborhood of $x_0$ there exists a local frame $v_1(x), \dots,
v_n(x)$  such that the matrices of $\mathcal{A}$ and $\mathcal{B}$ have the block-diagonal form as in the  JK theorem, but the eigenvalues $\lambda_i(x)$ depend on $x\in M$: \begin{equation} \renewcommand*{\arraystretch}{1.2} A_i = \left(\begin{array}{c|c} 0 & J(\lambda_i(x)) \\
\hline - J^{T}(\lambda_i(x)) &
0\end{array} \right), \quad  B_i = \left(\begin{array}{c|c} 0 & E \\
\hline - E& 0\end{array} \right).\end{equation} Note that for JK-regular points the number of distinct eigenvalues $\lambda_i(x)$ locally remains the same. Eigenvalues that are equal at $x_0$ remain equal in a neighborhood $Ox_0$:\[ \lambda_i(x_0) = \lambda_j(x_0) \qquad \Rightarrow \qquad \lambda_i(x) = \lambda_j(x), \quad x \in Ox_0.\] 

In \cite{BolsZhang} the \textbf{characteristic polynomial} $p_{\mathcal{P}}(\lambda)$ of $\mathcal{P} = \left\{ \mathcal{A} + \lambda \mathcal{B}\right\}$ is defined as follows. Consider all diagonal minors $\Delta_I$ of the matrix $\mathcal{A} + \lambda \mathcal{B}$ of order rank $\mathcal{P}$ and take the Pfaffians $\operatorname{Pf}(\Delta_I)$, i.e. square roots, for each of them. The characteristic polynomial is the greatest common divisor of all these Pffaffians: \[ p_{\mathcal{P}} = \operatorname{gcd} \left(\operatorname{Pf}(\Delta_I) \right). \]

\subsection{Constructing new Poisson pencils using Casimir functions}

A function $f$ is a \textbf{Casimir function} of a Poisson bracket $\mathcal{A}$ if $\mathcal{A} df = 0$. We denote the set of all Casimir functions associated with a Poisson bracket $\mathcal{A}$ as $\mathcal{C}\left( \mathcal{A}\right)$.

\begin{assertion} \label{A:NewPoissonCasimir} Let $\mathcal{A}$ and $\mathcal{B}$ be two compatible Poisson brackets on $M$. Assume that $f$ is a Casimir function for both brackets, i.e. $f \in \mathcal{C} \left( \mathcal{A} \right) \cap \mathcal{C}\left(\mathcal{B}\right)$. Then we have the following:

\begin{enumerate}
    \item The sum $\mathcal{A}_f = \mathcal{A} + f \mathcal{B}$ is a well-defined Poisson bracket on $M$.
    
    \item The bracket $\mathcal{A}_f$ is compatible with the brackets $\mathcal{A}$ and $\mathcal{B}$.

    \item The KCF of $\mathcal{A}_f(x) + \lambda \mathcal{B}(x)$ can be obtained from KCF of $\mathcal{A}(x) + \lambda \mathcal{B}(x)$ if we replace each eigenvalue $\lambda_j(x)$ with $\lambda_j(x) + f(x)$.

    \item Functions $g$ and $h$ are in bi-involution w.r.t. $\mathcal{A}$ and $\mathcal{B}$ if and only if they are in bi-involution w.r.t. $\mathcal{A}_f$ and $\mathcal{B}$.
\end{enumerate} 

\end{assertion}

\begin{proof}[Proof of Assertion~\ref{A:NewPoissonCasimir}] All the statement can be directly derived from the following well-known facts about the Schouten bracket, also known as the \textbf{Schouten--Nijenhuis bracket} (see e.g. \cite{DufourZung05}):

\begin{itemize}

\item Two Poisson brackets $\mathcal{A}$ and $\mathcal{B}$ are compatible if and only if their Schouten bracket vanishes $[\mathcal{A}, \mathcal{B}] = 0$.

\item A function $f$ is a Casimir function of a Poisson bracket $\mathcal{A}$ if and only if  their Schouten bracket vanishes $[f, \mathcal{A}] = 0$.

\end{itemize}

Assertion~\ref{A:NewPoissonCasimir} is proved. \end{proof}

\subsection{Core distribution} \label{S:CoreMantle}

A \textbf{distribution} on a manifold $M$ is the assignment to each point $x$ of M a vector subspace $D_x$ of the tangent space $T_xM$. The dimension of $D_x$ may depend on $x$. 

\begin{example} If $\mathcal{F} = \left\{ f_{\alpha} \,\, \bigr|\,\, \alpha \in  A\right\}$ is a family of functions on manifold $M$, then by $d \mathcal{F}$ denote the distribution in $T^*M$ given by  \[ d\mathcal{F}(x) = \operatorname{span} \left\{ df_{\alpha}(x) \, \, \bigr|\,\, \alpha \in A \right\}. \] \end{example}

For any distribution $\Delta \subset TM$ we can also consider its dual distribution $\Delta^0 \subset T^*M$, which is the distribution of annihilators.  We say that a distribution or subbundle $\Delta \subset T^*M$ is \textbf{isotropic} (bi-isotropic, etc.) if each subspace $\Delta_x$ is isotropic (bi-isotropic, etc.). The next statement is trivial.

\begin{assertion} \label{A:BiLagr} A family of function $\mathcal{F}$ on $(M,\mathcal{P})$ is a complete family of functions in bi-involution if and only if $d \mathcal{F}(x)$ is bi-Lagrangian subspace at a generic point $x \in M$. \end{assertion}

In this section we discuss the following important distribution.

\begin{definition} \label{D:CoreMantleDist} Let $\mathcal{P} = \left\{ \mathcal{A}_{\lambda} = \mathcal{A} + \lambda \mathcal{B}\right\}$ be a Poisson pencil on $M$.  The core subspace in each cotangent space $T^*_x M$ defines a  the \textbf{core distribution} $\mathcal{K}$ in $T^*M$. In other words, at each point $x \in M$
 \begin{equation} \label{Eq:CoreDist} \mathcal{K}_x =  \bigoplus_{\lambda - \text{regular for $\mathcal{P}(x)$}} \operatorname{Ker}\mathcal{A}_\lambda (x),  \end{equation}  \end{definition}

In practice we can generate the core distribution by taking a sufficient number of (local) Casimir functions. The next statement easily follows from the Jordan--Kronecker theorem and the Darboux--Weinstein Theorem.

\begin{assertion} \label{A:LocSpan} Let $\mathcal{P}$ be a Poisson pencil with  on $M$. If $\operatorname{rk} \mathcal{P} = \operatorname{const}$ on $M$, then in a sufficiently small neighborhood $U$ of any point $x_0$ there exist Casimir functions $f_{j, 1}, \dots, f_{j, m_j} \in \mathcal{C}\left(\mathcal{A}_{\mu_j}\right), j = 1, \dots, D$ such that 

\begin{enumerate} 

\item  $\mathcal{A}_{\mu_j}(x), j=1,\dots, D$ are regular in the linear pencil $\mathcal{P}(x)$ for any $x \in U$;  

\item the core distribution $\mathcal{K}$ is locally spanned by  the differentials of  Casimir functions:

\[ \mathcal{K}_x =  \operatorname{span} \left\{  df_{1, 1}(x), \dots, df_{D, m_D}(x) \right\}, \qquad \forall x \in U. \]

\end{enumerate}

\end{assertion}

\section{Bi-Poisson reduction} \label{S:BiPoisRed}

Bi-Poisson reduction is the fundamental technique that enables us to prove bi-integrability of bi-Hamiltonian systems. The main result is Theorem~\ref{T:BiPoissRed} in Section~\ref{SubS:BiPoisRed}. As a preliminary step, we present a linear analogue of bi-Poisson reduction for linear pencils in Section~\ref{S:BiPoisRedLin}. In Section~\ref{SubS:Caratheodory} we establish some technical results that we use in the proof of Theorem~\ref{T:BiPoissRed}.

\subsection{Linear bi-Poisson reduction} \label{S:BiPoisRedLin}

The next theorem is an analogue of linear symplectic reduction for a pair of $2$-forms. 

\begin{theorem} \label{T:BiPoissReduction} Let $\mathcal{L} = \left\{A_{\lambda} \right\}$ be a linear pencil on $V$ and let $U\subset \left(V, \mathcal{L} \right)$ be an admissible bi-isotropic subspace. Then

\begin{enumerate}

\item The induced pencil $\mathcal{L} ' = \left\{A'_{\lambda}\right\}$ on $U^{\perp}/ U$ is well-defined. 

\item If $L$ is a bi-Lagrangian (or bi-isotropic) subspace of $(V, B)$, then \[ L' = \left( \left( L \cap U^{\perp}\right) + U \right) / U\] is a bi-Lagrangian (respectively, bi-isotropic) subspace of $U^{\perp}/U$. 

\end{enumerate}

\end{theorem}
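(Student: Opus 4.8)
The plan is to prove the two parts in sequence, using the admissibility of $U$ as the crucial hypothesis that makes everything independent of $\lambda$.

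For part (1), I first observe that since $U$ is admissible, the skew-orthogonal complement $U^{\perp_\lambda}$ is the same subspace $U^{\perp}$ for almost all $\lambda$. Each form $A_\lambda$ restricted to $U^{\perp}$ descends to a well-defined form $A'_\lambda$ on the quotient $U^{\perp}/U$ precisely when $U$ lies in the kernel of $A_\lambda|_{U^{\perp}}$, i.e. when $A_\lambda(U, U^{\perp}) = 0$. But this last condition is exactly the statement $U^{\perp} \subseteq U^{\perp_\lambda}$, which holds for almost all $\lambda$ by admissibility. Combined with the bi-isotropy of $U$ (so that $A_\lambda(U,U)=0$ for all $\lambda$), the standard symplectic-reduction descent argument shows each $A'_\lambda$ is a well-defined skew form on $U^{\perp}/U$. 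I would then argue that the family $\{A'_\lambda\}$ is genuinely linear in $\lambda$ (it is the restriction of the linear family $\{A_\lambda\}$ followed by a fixed quotient projection), so $\mathcal{L}' = \{A'_\lambda\}$ is a linear pencil. The only subtlety is that the identity $U^{\perp_\lambda} = U^{\perp}$ fails on a finite exceptional set of $\lambda$; I would note that the descended forms extend by continuity (polynomiality in $\lambda$) to those values as well, so $\mathcal{L}'$ is defined for all $\lambda \in \bar{\mathbb{C}}$.

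For part (2), suppose $L$ is bi-isotropic. I would set $\tilde{L} = (L \cap U^{\perp}) + U$ and verify first that $\tilde{L}$ is bi-isotropic in $V$: for any $\lambda$, the value $A_\lambda$ on $L \cap U^{\perp}$ vanishes since $L$ is bi-isotropic, $A_\lambda$ on $U$ vanishes by bi-isotropy of $U$, and the cross term $A_\lambda(L \cap U^{\perp}, U)$ vanishes because $L \cap U^{\perp} \subset U^{\perp} \subseteq U^{\perp_\lambda}$ for almost all $\lambda$ (hence for all $\lambda$ by the polynomial argument). Since $\tilde{L}$ is bi-isotropic and contains $U$, its image $L' = \tilde{L}/U$ is bi-isotropic for the reduced pencil $\mathcal{L}'$, which handles the bi-isotropic case.

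The main obstacle — and the part requiring genuine care — is the dimension count for the bi-Lagrangian case, where I must show $\dim L' = \dim(U^{\perp}/U) - \tfrac{1}{2}\operatorname{rk}\mathcal{L}'$. Here I would invoke the assertion (quoted earlier) that any bi-Lagrangian $L$ satisfies $K \subseteq L \subseteq M$, together with Assertion~\ref{A:CoreMantle} describing $K$ and $M$ via the JK decomposition. The key quantitative input is relating $\operatorname{rk}\mathcal{L}'$ to $\operatorname{rk}\mathcal{L}$: passing from $V$ to $U^{\perp}/U$ drops the rank by exactly $2\dim U - 2\dim(U \cap K^{\text{contribution}})$ in a way I would compute from the block structure, since reduction by an admissible bi-isotropic subspace behaves like a simultaneous symplectic reduction on each regular form. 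I expect the cleanest route is to pick a regular value $\mu$, work with the single symplectic-type form $A_\mu$ on the Kronecker-complemented part, apply ordinary linear symplectic reduction there to get the rank drop $\operatorname{rk} A'_\mu = \operatorname{rk} A_\mu - 2\dim(U/(U \cap \ker A_\mu))$, and then check that $\mu$ remains regular for $\mathcal{L}'$ so that $\operatorname{rk}\mathcal{L}' = \operatorname{rk} A'_\mu$. Feeding the bi-Lagrangian dimension of $L$ and the equality $\dim L' = \dim L - \dim(U)$ adjusted for the intersection $L \cap U^{\perp}$ (which, since $U \subseteq L^{\perp}$-type containment or direct JK inspection gives $L \supseteq$ a controlled part of $U$) into this rank formula should yield the required half-rank identity. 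The delicate bookkeeping is ensuring the intersection and sum dimensions combine correctly, for which I would lean on $K \subseteq L$ and the admissibility of both $U$ and $L$ rather than on coordinate computation.
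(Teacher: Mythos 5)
Your part (1) and the bi-isotropic half of part (2) are correct. (For what it is worth, the paper states Theorem~\ref{T:BiPoissReduction} without proof, deferring to elementary linear algebra and to \cite{Kozlov24BiLagr}, so your proposal has to stand on its own.) The mechanism you use is exactly the right one: $A_{\lambda}(u,w)$ is affine in $\lambda$, so if it vanishes for the infinitely many $\lambda$ with $U^{\perp_{\lambda}}=U^{\perp}$, it vanishes identically; hence every form of the pencil descends to $U^{\perp}/U$, the quotient family is again of the form $A'+\lambda B'$, and the cross terms $A_{\lambda}\bigl(L\cap U^{\perp},U\bigr)$ vanish for all $\lambda$, which settles the bi-isotropic case.

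The bi-Lagrangian case, by contrast, is a plan rather than a proof, and the one identity that carries the whole statement, namely $\dim L' = \dim\bigl(L\cap U^{\perp}\bigr)-\dim\bigl(L\cap U\bigr)=\dim L-\dim U$, is never derived: you say the bookkeeping ``should yield'' it, and you propose to lean on ``the admissibility of both $U$ and $L$'' --- but admissibility of $L$ is not a hypothesis of the theorem, so you may not invoke it without proof (it does hold for bi-Lagrangian subspaces, via $K\subset L\subset M$ and $P$-invariance of $L/K$, but you neither state nor prove this, and it is in fact unnecessary). Here is how to close the gap with only the stated hypotheses. Fix $\mu$ regular simultaneously for $\mathcal{L}$ and $\mathcal{L}'$ (each pencil has only finitely many singular values, so such $\mu$ exist; this also disposes of your worry that $\mu$ might fail to be regular after reduction). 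By the Jordan--Kronecker theorem $\operatorname{Ker}A_{\mu}\subseteq K$, and $K\subseteq L$ by the assertion you quote; therefore
\[ \dim L^{\perp_{\mu}} \;=\; \dim V-\dim L+\dim\bigl(L\cap\operatorname{Ker}A_{\mu}\bigr) \;=\; \dim V-\dim L+\dim\operatorname{Ker}A_{\mu} \;=\; \dim L, \]
so $L=L^{\perp_{\mu}}$. Now pass to the symplectic quotient $V/\operatorname{Ker}A_{\mu}$; since $\operatorname{Ker}A_{\mu}\subseteq L$, images and intersections behave well (the modular law gives $L\cap\bigl(U+\operatorname{Ker}A_{\mu}\bigr)=(L\cap U)+\operatorname{Ker}A_{\mu}$), and the standard Lagrangian identity $\bigl(\bar L\cap\bar U^{\perp}\bigr)^{\perp}=\bar L+\bar U$ there yields precisely $\dim\bigl(L\cap U^{\perp}\bigr)-\dim\bigl(L\cap U\bigr)=\dim L-\dim U$. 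Combining this with your (correct) rank-drop formula $\operatorname{rk}A'_{\mu}=\operatorname{rk}A_{\mu}-2\bigl(\dim U-\dim(U\cap\operatorname{Ker}A_{\mu})\bigr)$ and with $\dim U^{\perp}=\dim V-\dim U+\dim\bigl(U\cap\operatorname{Ker}A_{\mu}\bigr)$, the terms $\dim\bigl(U\cap\operatorname{Ker}A_{\mu}\bigr)$ cancel, giving $\dim\bigl(U^{\perp}/U\bigr)-\tfrac12\operatorname{rk}\mathcal{L}' = \dim V-\tfrac12\operatorname{rk}\mathcal{L}-\dim U = \dim L-\dim U = \dim L'$, which is the required half-rank identity. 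So your outline is completable, but as written the decisive step is missing and the appeal to admissibility of $L$ is an unjustified (and avoidable) shortcut.
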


We need the following simple statement. 

\begin{assertion} \label{A:AdmSpectr}  Under the conditions of Theorem~\ref{T:BiPoissReduction}, if the admissible subspace $U$ contains the core subspace $K$, then the following holds.

\begin{enumerate} 

\item All eigenvalues of $\mathcal{L} '$ are eigenvalues of $\mathcal{L} $, i.e. \begin{equation} \label{Eq:SpectrSub} \sigma(\mathcal{L} ') \subseteq \sigma(\mathcal{L} ).\end{equation} In other words, if $A_{\lambda} \in \mathcal{L} $ is regular, then the induced form $A'_{\lambda}$ is also regular.

\item The induced pencil  $\mathcal{L} ' = \left\{A'_{\lambda}\right\}$ is nondegenerate, i.e. $\operatorname{Ker}\mathcal{A}'_{\lambda} = 0$ for generic $\lambda$.

\end{enumerate}
\end{assertion}

\begin{proof}[Proof of Assertion~\ref{A:AdmSpectr}] In the Jordan case the subspace $U^{\perp}/U$ is $P$-invariant and the induced pencil on it is nondegenerate and doesn't have new eigenvalues. We can reduce the general case to the Jordan case by performing the reduction as in Theorem~\ref{T:BiPoissReduction} w.r.t. the core subspace $K$. Assertion~\ref{A:AdmSpectr} is proved. 
\end{proof}

\subsection{Caratheodory--Jacobi--Lie theorem for Poisson manifolds} \label{SubS:Caratheodory}

In Section~\ref{SubS:BiPoisRed} we establish  integrability of the subbundle $\Delta^{\perp} \subset T^*M$, where $\Delta \subset T^*M$ is an integrable bi-isotropic admissible subbundle that contains the core distribution $\mathcal{K} \subset \Delta$.  To achieve this, we will utilize the following Caratheodory--Jacobi--Lie theorem for Poisson manifolds. It is a slight modification of \cite[Theorem 2.1]{Miranda08}.

\begin{theorem} \label{T:CaraJacobLiePoisson} Let $(M, \mathcal{A})$ be a Poisson manifold, $\dim M = n$ and $\operatorname{rk} \mathcal{A} = 2k$ on $M$. Assume that 

\begin{itemize} 

\item  $z_1, \dots, z_{n-2k}$ are Casimir functions, i.e. $\left\{f, z_j\right\} = 0$,

\item  $p_1, \dots, p_r$, where $r \leq k$, are smooth functions in involution $\left\{p_i, p_j \right\} = 0$,

\item $dp_1, \dots, dp_r$ and $dz_1,\dots, dz_{n-2k}$ are linearly independent at $x \in M$, i.e. \[ \left(dp_1 \wedge \dots \wedge dp_r \wedge dz_1 \wedge \dots dz_{n-2k}\right)\bigr|_{x} \not = 0.\] 

\end{itemize} Then there exist functions $p_{r+1}, \dots, p_k, q_1, \dots, q_k$ such that $(p_i, q_i, z_j)$ are local coordinates  at $x$ and  \begin{equation} \label{Eq:CaraJracobLieBracket}\mathcal{A} = \sum_{i=1}^k \frac{\partial}{\partial p_i} \wedge \frac{\partial}{\partial q_i}. \end{equation} \end{theorem}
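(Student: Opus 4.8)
The plan is to run the classical Caratheodory--Jacobi--Lie scheme, building the conjugate variables $q_i$ one canonical pair at a time by induction on the number $r$ of prescribed functions in involution, with the base case $r=0$ being exactly the Darboux--Weinstein splitting theorem. Throughout, the Casimir functions $z_j$ play a purely passive role: since $\mathcal{A}\,dz_j = 0$ their Hamiltonian fields vanish, so every function automatically Poisson-commutes with them, and they will furnish the transverse (Casimir) coordinates of the final chart.

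First I would extract the content of the independence hypothesis. The kernel $\operatorname{Ker}\mathcal{A}(x)\subset T^*_x M$ has dimension $n-2k$ and contains every $dz_j(x)$; as there are exactly $n-2k$ of these and they are independent, they span $\operatorname{Ker}\mathcal{A}(x)$. Hence the joint independence of $dp_1,\dots,dp_r,dz_1,\dots,dz_{n-2k}$ says precisely that no nontrivial combination $\sum c_i\,dp_i$ lies in $\operatorname{Ker}\mathcal{A}(x)$, which means the Hamiltonian fields $X_{p_1},\dots,X_{p_r}$ are linearly independent at $x$; they commute because $[X_{p_i},X_{p_j}]=X_{\{p_i,p_j\}}=0$. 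By the simultaneous rectification of commuting independent vector fields I then obtain a function $q_1$ with $\{p_1,q_1\}=1$ and $\{p_j,q_1\}=0$ for $j\ge 2$ (the relations $\{z_l,q_1\}=0$ holding automatically), after adjusting the sign.

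The inductive step is a local Poisson reduction by this first pair. The commuting, independent Hamiltonian flows of $X_{p_1}$ and $X_{q_1}$ define a local $2$-dimensional foliation; let $\bar M$ be the local quotient (equivalently the transverse slice $S=\{p_1=\mathrm{const},\,q_1=\mathrm{const}\}$). Functions invariant under both flows are exactly those Poisson-commuting with $p_1$ and $q_1$, and they form a Poisson subalgebra, so $\bar M$ inherits a Poisson structure $\bar{\mathcal{A}}$ of rank $2k-2$ and the same corank $n-2k$. Since $p_2,\dots,p_r$ and $z_1,\dots,z_{n-2k}$ all commute with $p_1,q_1$, they descend to $\bar M$, where the $\bar p_j$ are in involution, the $\bar z_l$ are Casimirs, and their differentials stay independent. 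Applying the induction hypothesis on $(\bar M,\bar{\mathcal{A}})$ produces the remaining conjugate variables; pulling them back to $M$ as flow-invariant functions gives $q_2,\dots,q_k$ and $p_{r+1},\dots,p_k$ with $\{p_1,q_j\}=\{q_1,q_j\}=0$ for $j\ge 2$, while all bracket relations among the reduced functions lift verbatim. The base case $r=0$, where only the Casimirs are prescribed, is the Darboux--Weinstein theorem itself: it produces all $k$ canonical pairs in a chart whose transverse coordinates can be taken to be the given independent Casimirs $z_1,\dots,z_{n-2k}$ (any maximal independent set of Casimirs differs from the ones furnished by the theorem by a local diffeomorphism of the transverse factor). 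Assembling the pairs built at every step with those from the base yields $p_1,\dots,p_k,q_1,\dots,q_k$ which, together with $z_1,\dots,z_{n-2k}$, are $n$ functions with independent differentials, hence local coordinates, in which $\mathcal{A}=\sum_{i=1}^{k}\partial_{p_i}\wedge\partial_{q_i}$.

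The step I expect to be the main obstacle is the reduction itself: verifying that the local quotient $\bar M$ genuinely carries a well-defined Poisson structure of constant rank $2k-2$ and that, on functions commuting with $p_1,q_1$, the reduced bracket agrees with the ambient one. This is the Poisson analogue of passing to the symplectic-orthogonal complement of a nondegenerate pair, and it is precisely where the constant-rank hypothesis $\operatorname{rk}\mathcal{A}=2k$ is used. The bookkeeping that makes the induction close is the persistence of the prescribed Casimirs $z_j$ as independent Casimirs on every reduced manifold; relative to \cite[Theorem 2.1]{Miranda08} this carrying-along of the Casimir data, together with allowing $r<k$, is the only genuine modification, the rest being the standard Caratheodory--Jacobi--Lie argument.
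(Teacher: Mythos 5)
Your proof is correct, but it follows a genuinely different route from the paper's. The paper treats \cite[Theorem 2.1]{Miranda08} as a black box: applied to $p_1,\dots,p_r$ (whose Hamiltonian fields are independent, exactly by your opening kernel argument), it immediately yields coordinates $(p_1,\dots,p_r,q_1,\dots,q_r,s_1,\dots,s_{n-2r})$ in which $\mathcal{A}=\sum_{i=1}^{r}\frac{\partial}{\partial p_i}\wedge\frac{\partial}{\partial q_i}+\sum_{l,m}g_{lm}(s)\frac{\partial}{\partial s_l}\wedge\frac{\partial}{\partial s_m}$; the paper then observes that the given $z_j$, being Casimirs of $\mathcal{A}$, depend on $s$ alone and are Casimirs of the transverse bivector, and finishes with a single application of the Darboux--Weinstein theorem to that bivector, which produces the remaining pairs $p_{r+1},\dots,p_k,q_{r+1},\dots,q_k$ and lets the $z_j$ themselves serve as the transverse coordinates. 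You instead unfold the black box: your induction on $r$ --- rectify the commuting independent fields $X_{p_i}$ to manufacture $q_1$, reduce by the pair $(p_1,q_1)$, transport the $z_j$ and $p_2,\dots,p_r$ to the local quotient --- is essentially the classical Caratheodory--Jacobi--Lie scheme by which the cited theorem is proved, with the Casimir data carried through every reduction rather than absorbed once at the end. The paper's route buys brevity (two quotations of known results); yours buys self-containedness, at the price of the reduction lemma you rightly flag as the crux. That lemma does hold, and the verification is the linear-algebra count: differentials of flow-invariant functions fill out the annihilator $W=\operatorname{span}\{X_{p_1},X_{q_1}\}^{0}$, and since $\operatorname{Ker}\mathcal{A}\subset W$ one gets $\dim\mathcal{A}(W)=\dim W-\dim\operatorname{Ker}\mathcal{A}=(n-2)-(n-2k)=2k-2$, so the quotient bracket has constant rank $2k-2$ and corank $n-2k$ and the induction closes. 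Your treatment of the base case (replacing the transverse coordinates furnished by Darboux--Weinstein with the prescribed independent Casimirs) coincides with the paper's final step, so the two arguments ultimately rest on the same pillars and differ in packaging: citation versus reconstruction.
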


\begin{proof}[Proof of Theorem~\ref{T:CaraJacobLiePoisson}] Since $dp_1, \dots, dp_r, dz_1,\dots, dz_{n-2k}$ are linearly independent and \[\operatorname{Ker} \mathcal{A} = \operatorname{span}\left\{dz_1, \dots, dz_{n-2k} \right\}\] the Hamiltonian vector fields $X_{p_1}, \dots, X_{p_r}$ are linearly independent. By \cite[Theorem 2.1]{Miranda08} there exists local coordinates $p_1, \dots, p_r, q_1, \dots, q_r, s_1, \dots, s_{n-2r}$ such that \[ \mathcal{A} =  \sum_{i=1}^r \frac{\partial}{\partial p_i} \wedge \frac{\partial}{\partial q_i} + \sum_{i, j = 1}^{n-2r} g_{ij}(s)  \frac{\partial}{\partial s_i} \wedge \frac{\partial}{\partial s_j}. \] It remains to note that $z_j$ are Casimir functions for the Poisson bivector \[\sum_{i, j = 1}^{n-2r} g_{ij}(s)  \frac{\partial}{\partial s_i} \wedge \frac{\partial}{\partial s_j}\] and apply the Darboux--Weinstein theorem for it. Theorem~\ref{T:CaraJacobLiePoisson} is proved. \end{proof}

We need Theorem~\ref{T:CaraJacobLiePoisson} for the following statement. Recall that a subbundle $\Delta \subset T^*M$ is integrable if and only if its dual subbundle $\Delta^0 \subset TM$ is integrable. 

\begin{corollary} \label{Cor:PoisDualIntDistInt} Let  $(M, \mathcal{A})$ be a Poisson manifold and $\operatorname{rk} \mathcal{A} = 2k$ on $M$. Let $\Delta \subset T^*M$ be an integrable isotropic subbundle such that  $\operatorname{Ker} \mathcal{A} \subset \Delta$. Then $\Delta^{\perp}$ is an integrable subbundle of $T^*M$.  \end{corollary}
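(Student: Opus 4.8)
The plan is to produce, in a neighborhood of any point, local coordinates in which $\Delta^{\perp}$ is spanned by coordinate differentials; integrability then follows at once, since a subbundle of $T^*M$ that is locally generated by exact (hence closed) $1$-forms is integrable. The whole point is to massage the data carried by $\Delta$ into exactly the hypotheses of the Caratheodory--Jacobi--Lie Theorem~\ref{T:CaraJacobLiePoisson}.

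First I would fix a convenient local generating set for $\Delta$. Since $\operatorname{rk}\mathcal{A} = 2k$ is constant, the codistribution $\operatorname{Ker}\mathcal{A} = (\operatorname{Im}\mathcal{A})^0$ has constant rank $n-2k$ and is integrable (its dual $\operatorname{Im}\mathcal{A}$ is the tangent distribution of the symplectic foliation), so by the regular-rank Darboux--Weinstein theorem it is locally spanned by differentials of Casimir functions, $\operatorname{Ker}\mathcal{A} = \operatorname{span}\{dz_1, \dots, dz_{n-2k}\}$. Because $\Delta$ is integrable it is locally spanned by exact $1$-forms, and since $\operatorname{Ker}\mathcal{A} \subset \Delta$ the Casimirs $z_j$ are constant on the leaves of $\Delta^0$, hence depend only on the remaining first integrals of $\Delta$; I can therefore complete them to an exact generating set $\Delta = \operatorname{span}\{dz_1, \dots, dz_{n-2k}, dp_1, \dots, dp_r\}$, where $r = \dim\Delta - (n-2k)$.

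Next I would extract the two structural consequences of the hypotheses. Isotropy of $\Delta$ w.r.t.\ $\mathcal{A}$ gives $\{p_i, p_j\} = \mathcal{A}(dp_i, dp_j) = 0$, so the $p_i$ are in involution (the pairings involving $dz_j$ vanish automatically, the $z_j$ being Casimirs). Moreover, passing to the $2k$-dimensional symplectic quotient $T^*_x M / \operatorname{Ker}\mathcal{A}$, the classes of $dp_1, \dots, dp_r$ are independent and span an isotropic subspace, which forces $r \le k$. Thus $z_1, \dots, z_{n-2k}$ and $p_1, \dots, p_r$ satisfy precisely the hypotheses of Theorem~\ref{T:CaraJacobLiePoisson}, which supplies local coordinates $(p_i, q_i, z_j)$ with $\mathcal{A} = \sum_{i=1}^{k} \frac{\partial}{\partial p_i} \wedge \frac{\partial}{\partial q_i}$.

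Finally I would read off $\Delta^{\perp}$ in these coordinates. The only nonzero cobrackets are $\mathcal{A}(dp_i, dq_i) = 1$, and $\Delta = \operatorname{span}\{dp_1, \dots, dp_r, dz_1, \dots, dz_{n-2k}\}$, so a covector lies in $\Delta^{\perp}$ if and only if its $dq_i$-components vanish for $i \le r$. Hence
\[ \Delta^{\perp} = \operatorname{span}\{dp_1, \dots, dp_k,\ dq_{r+1}, \dots, dq_k,\ dz_1, \dots, dz_{n-2k}\}, \]
which is spanned by coordinate differentials and is therefore integrable. I expect the only genuinely delicate step to be the bookkeeping in the second paragraph --- completing the Casimir generators of $\operatorname{Ker}\mathcal{A}$ to an exact generating set of $\Delta$ --- together with checking that isotropy yields both involutivity of the $p_i$ and the dimension bound $r \le k$; once the data is cast in the form required by Theorem~\ref{T:CaraJacobLiePoisson}, the conclusion is immediate.
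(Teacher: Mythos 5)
Your proposal is correct and follows essentially the same route as the paper: reduce the data to exact generators $dz_j$, $dp_i$ of $\Delta$, use isotropy to get involutivity of the $p_i$, apply the Caratheodory--Jacobi--Lie Theorem~\ref{T:CaraJacobLiePoisson}, and read off $\Delta^{\perp} = \operatorname{span}\{dp_1,\dots,dp_k,\,dq_{r+1},\dots,dq_k,\,dz_1,\dots,dz_{n-2k}\}$ in the resulting coordinates. The only additions are explicit verifications the paper leaves implicit (the bound $r\le k$ via the symplectic quotient, and the completion of the Casimir generators to a generating set of $\Delta$), which are fine.
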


\begin{proof}[Proof of Corollary~\ref{Cor:PoisDualIntDistInt} ] $\Delta^{\perp}$ is a subbundle, since $\operatorname{Ker} \mathcal{A} \subset \Delta$ and $\operatorname{rk} \mathcal{A} = \operatorname{const}$. It remains to prove the integrability of $\Delta^{\perp}$. Take any point $x \in M$.

\begin{itemize}

\item  Let $z_1, \dots, z_{n-2k}$ be local Casimir functions at $x$, i.e. $\left\{f, z_j\right\} = 0$,. 

\item Since $\Delta$ is integrable and  $\operatorname{Ker} \mathcal{A} \subset \Delta$, there exists functions $p_1, \dots, p_r$ such that $dp_1, \dots, dp_r$ and $dz_1,\dots, dz_{n-2k}$ are linearly independent at $x \in M$, i.e. \[ \left(dp_1 \wedge \dots \wedge dp_r \wedge dz_1 \wedge \dots dz_{n-2k}\right)\bigr|_{x} \not = 0,\]and $\Delta$  is locally given by the level sets of the functions $p_i, z_j$, i.e. \[ \Delta = \operatorname{span} \left\{ dp_1, \dots, dp_r,  dz_1, \dots, dz_{n-2k}\right\}.\] 

\item Since $\Delta$ is isotropic, the functions  $p_1, \dots, p_r$ are in involution $\left\{p_i, p_j \right\} = 0$.

\end{itemize}

Thus, we can apply  Theorem~\ref{T:CaraJacobLiePoisson} and get local coordinates \[p_1, \dots, p_k, q_1, \dots, q_k, z_1, \dots, z_{n-2k}\] such that \eqref{Eq:CaraJracobLieBracket} holds. In this coordinates \[\Delta^{\perp} = \operatorname{span}\left\{ dp_1, \dots, dp_k, dq_{r+1}, \dots, dq_k, dz_1, \dots, dz_{n-2k}\right\}.\] Therefore, $\Delta^{\perp}$ is integrable. Corollary~\ref{Cor:PoisDualIntDistInt} is proved. \end{proof}

\begin{remark} In the holomorhpic case the proof remains the same, but one should use holomorphic analogues of some theorems. For instance, instead of the Frobenious theorem one can use the fact that involutive holomorphic subbundles are integrable in the holomorphic sense (see e.g. \cite{Voisin}). \end{remark}

\subsection{Bi-Poisson reduction theorem} \label{SubS:BiPoisRed}

The next result is the main technique that allows us to bi-integrate bi-Hamiltonian systems. This theorem was previously established for $\Delta = \mathcal{K}$  in \cite[Theorem 5.9]{Kozlov23JKRealization}, the proof for the general case is roughly the same.

\begin{theorem}  \label{T:BiPoissRed}  Let $\mathcal{P} = \left\{ A_{\lambda} = \mathcal{A} + \lambda \mathcal{B}\right\}$ be a Poisson pencil on $M$ such that $\operatorname{rk} \mathcal{\mathcal{P}}(x) = 2k$ for all $x \in M$. Let $\Delta \subset T^*M$ be an integrable bi-isotropic admissible subbundle that contains the core distribution $\mathcal{K} \subset \Delta$. Then the following holds:

\begin{enumerate} 

\item $\Delta^{\perp}$ is an integrable admissible subbundle of  $T^*M$. 

\item Moreover, there exist local coordinates \begin{equation} \label{Eq:LocCoorBiPoissRed} (p, f, q) = (p_1,\dots, p_{m_1}, f_1, \dots, f_{m_2}, q_1, \dots, q_{m_3})\end{equation} such that \begin{equation} \label{Eq:DDperpleDistLoc} \Delta = \operatorname{span}\left\{dq_1, \dots, dq_{m_3} \right\}, \quad \Delta^{\perp} = \operatorname{span}\left\{df_1, \dots, df_{m_2}, dq_1, \dots, dq_{m_3} \right\}\end{equation} and the pencil has the form \begin{equation} \label{Eq:BiPoissMat} \mathcal{A}_{\lambda} = \sum_{i=1}^{m_1} \frac{\partial}{\partial p_i} \wedge v_{\lambda, i} + \sum_{1 \leq i < j \leq m_2} c_{\lambda, ij}(f, q)  \frac{\partial}{\partial f_i} \wedge \frac{\partial}{\partial f_j} \end{equation} for some vectors $v_{\lambda, i} = v_{\lambda, i}(p, f, q)$ and some functions $c_{\lambda, ij}(f, q)$. 

\end{enumerate}

\end{theorem}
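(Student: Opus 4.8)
The plan is to prove the two assertions in turn: deduce integrability and admissibility of $\Delta^{\perp}$ from one well-chosen regular bracket of the pencil, then construct the coordinates by Frobenius and pin down the normal form \eqref{Eq:BiPoissMat} with the Jacobi identity. Throughout I work near a fixed base point $x_0$.

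First I would choose a finite regular value $\lambda_0 \in \mathbb{K}$, i.e. $\lambda_0 \notin -\Lambda(\mathcal{P}(x_0))$; such a value exists since there are only finitely many eigenvalues. Because $\operatorname{rk} \mathcal{A}_{\lambda_0}(x_0) = 2k$, the rank is lower semicontinuous, and $\operatorname{rk} \mathcal{A}_{\lambda_0} \le \operatorname{rk} \mathcal{P} = 2k$, the bracket $\mathcal{A}_{\lambda_0}$ stays regular on a whole neighborhood $U$ of $x_0$. On $U$ we have $\operatorname{Ker} \mathcal{A}_{\lambda_0} \subset \mathcal{K} \subset \Delta$, the subbundle $\Delta$ is $\mathcal{A}_{\lambda_0}$-isotropic (being bi-isotropic) and integrable, and $\operatorname{rk} \mathcal{A}_{\lambda_0} = 2k$ is constant. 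Since $\Delta$ is admissible and contains the core, its skew-orthogonal complement agrees with the common value $\Delta^{\perp}$ at every regular form, so $\Delta^{\perp} = \Delta^{\perp_{\lambda_0}}$ on $U$. Hence Corollary~\ref{Cor:PoisDualIntDistInt}, applied to the single Poisson bracket $\mathcal{A}_{\lambda_0}$, shows that $\Delta^{\perp}$ is an integrable subbundle. Admissibility of $\Delta^{\perp}$ is then pure pointwise linear algebra: for a regular form $(\Delta^{\perp_\lambda})^{\perp_\lambda} = \Delta + \operatorname{Ker} \mathcal{A}_\lambda = \Delta$ because $\operatorname{Ker} \mathcal{A}_\lambda \subset \Delta$, so $(\Delta^{\perp})^{\perp_\lambda} = \Delta$ for all regular $\lambda$, which gives both admissibility and $(\Delta^{\perp})^{\perp} = \Delta$.

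Next I would produce the coordinates. The inclusion $\Delta \subset \Delta^{\perp}$ (bi-isotropy of $\Delta$ read at a regular form) together with integrability of both subbundles gives a nested pair of involutive distributions $(\Delta^{\perp})^{0} \subset \Delta^{0}$ in $TM$. The Frobenius theorem for this nested pair yields coordinates $(p_1,\dots,p_{m_1},f_1,\dots,f_{m_2},q_1,\dots,q_{m_3})$ with $\Delta = \operatorname{span}\{dq_1,\dots,dq_{m_3}\}$ and $\Delta^{\perp} = \operatorname{span}\{df_1,\dots,df_{m_2},dq_1,\dots,dq_{m_3}\}$, which is \eqref{Eq:DDperpleDistLoc}. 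In these coordinates bi-isotropy of $\Delta$ gives $\{q_c,q_{c'}\}_{\lambda} = 0$, while $df_b \in \Delta^{\perp} = \Delta^{\perp_\lambda}$ for regular $\lambda$ (hence for all $\lambda$, by linearity of $\mathcal{A}_\lambda$ in $\lambda$) gives $\{f_b,q_c\}_{\lambda} = 0$. Therefore the only components of the bivector $\mathcal{A}_{\lambda}$ not involving a $p$-index are the $\{f_b,f_{b'}\}_{\lambda}$, and collecting all $\partial/\partial p_a$-terms into vectors $v_{\lambda,a}$ rewrites the pencil as $\mathcal{A}_{\lambda} = \sum_a \partial_{p_a} \wedge v_{\lambda,a} + \sum_{b<b'} \{f_b,f_{b'}\}_{\lambda}\, \partial_{f_b} \wedge \partial_{f_{b'}}$.

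The one genuinely nontrivial point — the step I expect to be the main obstacle — is that the coefficients $c_{\lambda,bb'} = \{f_b,f_{b'}\}_{\lambda}$ are independent of the $p$-variables. I would prove this from the Jacobi identity of $\mathcal{A}_{\lambda}$: since $\{q_c,f_b\}_{\lambda} = 0$, it follows that $\{q_c,\{f_b,f_{b'}\}_{\lambda}\}_{\lambda} = 0$, i.e. $\{f_b,f_{b'}\}_{\lambda}$ is constant along every Hamiltonian field $X_{q_c}^{\lambda} = \mathcal{A}_{\lambda}\, dq_c$. For the regular value $\lambda_0$, the vanishing of $\{q_c,f_b\}_{\lambda_0}$ and $\{q_c,q_{c'}\}_{\lambda_0}$ forces $X_{q_c}^{\lambda_0} \in \operatorname{span}\{\partial_{p_a}\}$, and the dimension count $\dim \mathcal{A}_{\lambda_0}\Delta = m_3 - (n-2k) = m_1$ (using $\operatorname{Ker} \mathcal{A}_{\lambda_0} \subset \Delta$) shows these fields span all of $\operatorname{span}\{\partial_{p_1},\dots,\partial_{p_{m_1}}\}$ at every point of $U$. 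Hence $\{f_b,f_{b'}\}_{\lambda_0}$ is annihilated by each $\partial/\partial p_a$, so it depends only on $(f,q)$; writing $\{f_b,f_{b'}\}_{\lambda} = \{f_b,f_{b'}\}_{\mathcal{A}} + \lambda \{f_b,f_{b'}\}_{\mathcal{B}}$ and invoking two regular values propagates this $p$-independence to the entire pencil. This gives precisely \eqref{Eq:BiPoissMat} and finishes the proof.
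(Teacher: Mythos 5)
Your proof is correct and follows essentially the same route as the paper's: integrability of $\Delta^{\perp}$ via Corollary~\ref{Cor:PoisDualIntDistInt}, Frobenius coordinates adapted to the nested pair $\Delta \subset \Delta^{\perp}$, bi-isotropy for the block structure, and the Jacobi identity $\left\{q_c, \left\{f_b, f_{b'}\right\}_{\lambda}\right\}_{\lambda} = 0$ combined with the fact that the Hamiltonian fields $\mathcal{A}_{\lambda}\, dq_c$ span $\operatorname{span}\left\{\partial/\partial p_1, \dots, \partial/\partial p_{m_1}\right\}$ for regular $\lambda$. The only differences are cosmetic: where the paper justifies this span claim by inspecting the Kronecker canonical form, you use an elementary dimension count, and you spell out two points the paper leaves implicit, namely that $\Delta^{\perp}$ coincides with $\Delta^{\perp_{\lambda_0}}$ for a regular bracket $\mathcal{A}_{\lambda_0}$ and that $\Delta^{\perp}$ is admissible.
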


Simply speaking, the matrices of the Poisson brackets in Theorem~\ref{T:BiPoissRed} take the form \[\mathcal{A}_{\lambda} = \left( \begin{matrix} * & * & * \\ * & C_{\lambda}(f, q) & 0 \\ * & 0 & 0\end{matrix} \right), \]  where $*$ are some matrices.  Obviously, the vector fields $v_{\lambda, i} = v_{\lambda, i}(x, s, y)$ and the functions $c_{\lambda, ij}(s, y)$ depend linearly on $\lambda$: \[ v_{\lambda, i} = v_{0, i} + \lambda v_{\infty, i}, \qquad c_{\lambda, ij}(f, q) = c_{0, ij}(f, q) + \lambda c_{\infty, ij}(f, q).\]

\begin{proof}[Proof of Theorem~\ref{T:BiPoissRed}]  $\Delta^{\perp}$ is a  integrable admissible subbundle by Corollary~\ref{Cor:PoisDualIntDistInt}.  The rest of the proof is in several steps.

\begin{enumerate} 

\item  Since $\Delta$ and $\Delta^{\perp}$ are integrable, there exist local coordinates \eqref{Eq:LocCoorBiPoissRed} such that $\Delta$ and $\Delta^{\perp}$ have the form \eqref{Eq:DDperpleDistLoc}. 

\item Since $\Delta$ is bi-isotropic, $\Delta \subset \Delta^{\perp}$ and thus \[ \left\{f_i,q_j \right\}_{\lambda} = 0, \qquad \left\{ q_i, q_j \right\}_{\lambda} = 0.\] In other words,  the matrices of Poisson brackets have the form \[\mathcal{A}_{\lambda} = \left( \begin{matrix} * & * & * \\ * & C_{\lambda}(p, f, q) & 0 \\ * & 0 & 0\end{matrix} \right). \]

\item It remains to prove that $c_{\lambda, ij} = \left\{f_i, f_j\right\}_{\lambda}$ do not depend on $p_1, \dots, p_{m_1}$. It follows from the Jacobi identity: \[\left\{ q_k, \left\{ f_i, f_j \right\}_{\lambda} \right\}_{\lambda} = \left\{ \left\{ q_k, f_i, \right\}_{\lambda}  f_j \right\}_{\lambda} +\left\{ f_i, \left\{ q_k,  f_j \right\}_{\lambda}  \right\}_{\lambda} = 0.\]  Consider the Hamiltonian vector fields \[\mathcal{A}_{\lambda} dq_k  = \left\{ q_k, \cdot \right\}_{\lambda}.\] Recall that $\Delta = \operatorname{span} \left\{dq_1, \dots, dq_{m_3}\right\}$ contain the core $\mathcal{K}$. Using the JK theorem,  it is easy to check that for any point $x \in M$ and for any value $\lambda \in \bar{\mathbb{C}}$ that is regular for $\mathcal{P}(x)$ we have \[ \operatorname{span} \left\{ \mathcal{A}_{\lambda} dq_1, \dots, \mathcal{A}_{\lambda} dq_{m_3}\right\} (x) = \left\{ \frac{\partial }{\partial  p_1}, \dots, \frac{\partial }{\partial  p_{m_1}}\right\}.\]  We get that \[ \frac{\partial \left\{ f_i, f_j \right\}_{\lambda} }{\partial  p_k}  = 0, \qquad k =1, \dots, m_1, \qquad \forall  \lambda\in \bar{\mathbb{C}}\] and thus $c_{\lambda, ij} = c_{\lambda, ij}(f, q)$, as required.

\end{enumerate}

Theorem~\ref{T:BiPoissRed} is proved.  \end{proof}

\begin{definition} Let $\mathcal{P}$ be a Poisson pencil on $M$ with constant rank and $\Delta \subset T^*M$ be an integrable bi-isotropic admissible subbundle. We perform a local \textbf{bi-Poisson reduction} near $x \in M$ by quotienting a sufficiently small neighborhood $U$ of $x$ by the distribution $\left(\Delta^{\perp}\right)^0$. This induces a new Poisson pencil $\mathcal{P}'$ on the quotient space $U/ \left(\Delta^{\perp}\right)^0$,  with the projection \[ \pi: (U, \mathcal{P}) \to \left(U/ \left(\Delta^{\perp}\right)^0, \mathcal{P}' \right).\] \end{definition}

Theorem~\ref{T:BiPoissRed} guarantees that we can perform (local) bi-Poisson reduction. In the local coordinates $(p, f, q)$ from this theorem \[ \left(\Delta^\perp\right)^0 = \operatorname{span}\left\{\frac{\partial}{\partial p_1}, \dots, \frac{\partial}{\partial p_{m_1}}\right\}.\] Thus, $(f,q)$ are local coordinates on the quotient $U/ \left(\Delta^{\perp}\right)^0$ and the induced pencil $\mathcal{P}'$ takes the form \[ \mathcal{P}' = \left(\begin{matrix}  C_{\lambda}(f, q) & 0 \\ 0 & 0 \end{matrix} \right).\] 

\begin{remark} In practice, we often consider a set of functions $\mathcal{F}$ in bi-involution and $\Delta = d \mathcal{F}$. We then quotient by the local action of the Hamiltonian vector fields: \[  \left(\Delta^{\perp}\right)^0 = \left\{ \mathcal{A}_{\lambda} dg  \quad \bigr| \quad g \in \mathcal{F} \right\}, \] for any  Poisson bracket $\mathcal{A}_{\lambda} \in \mathcal{P}$ that is regular everywhere on $M$. When performing bi-Poisson reduction, we essentially focus on the algebra of functions that are in bi-involution with all functions in $\mathcal{F}$. \end{remark}

\section{Main theorem} \label{S:MainTh}

Our objective is to prove Theorem~\ref{T:MultHamBiInt}. In fact, we prove a more general Theorem~\ref{T:MainGenTh}.  Let $\mathcal{P} = \left\{\mathcal{A}_{\lambda} \right\}$ be a Poisson pencil on $M$ and $v = \mathcal{A}_{\lambda} dH_{\lambda}$ be a system which is bi-Hamiltonian w.r.t. $\mathcal{P}$.

\begin{definition} \label{Def:StandInt} For an open subset $U \subset M$ the \textbf{family of standard integrals} $\mathcal{F}$ on $U$ consists of the following functions:

\begin{enumerate}
    \item Casimir functions $f_{\lambda}$ for brackets $\mathcal{A}_{\lambda}$ that are regular on $U$.

    \item Eigenvalues $\lambda_j(x)$ of the pencil $\mathcal{P}$. 
    
    \item The Hamiltonians $H_{\alpha}$ for all $\alpha \in \bar{\mathbb{K}}$. 
\end{enumerate}

\end{definition}

We prove that the standard integrals are first integrals of a bi-Hamiltonian system in Section~\ref{SubS:StandBiInv}. Casimir functions and eigenvalues may not be well-defined on the entire manifold $M$.  Therefore, we restrict our attention to a sufficiently small neighborhood\footnote{Alternatively, one could consider the germs of these local integrals.} of a point $x \in M$ to ensure their well-definedness.

\begin{definition} \label{Def:SmallNeigh} We say that a neighborhood $Ux$ of a point $x\in M$ is \textbf{small} if the following two conditions hold:

\begin{enumerate}
    \item The core distribution $\mathcal{K} \subseteq d \mathcal{F}$, where $\mathcal{F}$ is the family of standard integrals on $Ux$. In other words, $Ux$ satisfies Assertion~\ref{A:LocSpan}. 
    
    \item All eigenvalues $\lambda_j$ are finite, i.e. $\lambda_j < \infty$, and are well-defined functions on $Ux$.
\end{enumerate} 

\end{definition}

It is evident that any JK-regular point $x\in M$ that possesses finite eigenvalues $\lambda_j(x) < \infty$ has a small neighborhood. Our main result is the following.

\begin{theorem} \label{T:MainGenTh} Let $\mathcal{P}$ be a Poisson pencil on a real $C^{\infty}$-smooth or complex analytic manifold $M$ and $v = \mathcal{A}_\lambda d H_{\lambda}$ be a vector field that is bi-Hamiltonian w.r.t. $\mathcal{P}$. In the real case, we assume that all eigenvalues of $\mathcal{P}$ are real. Let $x_0 \in M$ be a JK-regular point and $\mathcal{F}$ be a family of standard integrals in a small neighborhood $Ux_0$. Assume that the following two conditions are satisfied:

\begin{enumerate} 

\item Locally, within a neighborhood of $x_0$, \[\dim d \mathcal{F}(x) = \operatorname{const}.\]

\item After bi-Poisson reduction w.r.t. $d \mathcal{F}$ the point $x_0$ remains JK-regular. 

\end{enumerate}

Then in a sufficiently small neighborhood $Ox_0$ the family $\mathcal{F}$ can be extended to complete family of functions $\mathcal{G} \supset \mathcal{F}$ in bi-involution. \end{theorem}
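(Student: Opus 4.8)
The plan is to apply bi-Poisson reduction with respect to $\Delta = d\mathcal{F}$ and thereby reduce the completion problem to the nondegenerate (Jordan) case. First I would verify that $\Delta = d\mathcal{F}$ satisfies the hypotheses of Theorem~\ref{T:BiPoissRed}. The subbundle $\Delta$ is \emph{bi-isotropic} because the standard integrals are pairwise in bi-involution (Lemma~\ref{L:StandIntBiInv}); it has constant rank by the first assumption of the theorem and is spanned by exact differentials, hence it is \emph{integrable}; and it \emph{contains the core} $\mathcal{K}$ because $Ux_0$ is small (Assertion~\ref{A:LocSpan}). The delicate point is \emph{admissibility}: by Assertion~\ref{A:SumAdm} it suffices to present $\Delta(x)$ as a sum of admissible subspaces. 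The core is admissible (Lemma~\ref{L:AdmInMantle}); the differentials $d\lambda_j$ of the eigenvalues span eigendirections of the recursion operator, hence an admissible subspace (Assertion~\ref{A:AdmChains} together with Assertion~\ref{A:CommutLinear}); and the differentials $dH_\alpha$ span a cyclic, and therefore admissible, subspace, where the relation $v = \mathcal{A}_\alpha dH_\alpha$ combined with $v(x) \in \operatorname{Im}\mathcal{A}_\alpha(x)$, which holds automatically here since $v$ is Hamiltonian for \emph{all} brackets (cf.\ \eqref{Eq:ImCond}), controls the chain. This is exactly where the hypothesis that $v$ be bi-Hamiltonian with respect to the whole pencil enters, and I expect the admissibility of the Hamiltonian part to require the most care.

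With $\Delta = d\mathcal{F}$ admissible, bi-isotropic, integrable and containing $\mathcal{K}$, Theorem~\ref{T:BiPoissRed} supplies adapted coordinates $(p,f,q)$ and a local bi-Poisson reduction $\pi\colon (U,\mathcal{P}) \to (Q,\mathcal{P}')$, with $Q$ carrying coordinates $(f,q)$. By Assertion~\ref{A:AdmSpectr} the reduced pencil $\mathcal{P}'$ is \emph{nondegenerate} and $\sigma(\mathcal{P}') \subseteq \sigma(\mathcal{P})$; moreover the eigenvalues $\lambda_j$, being members of $\mathcal{F}$, descend to Casimir functions of $\mathcal{P}'$ (they are among the $q$-coordinates), so the eigenvalues of $\mathcal{P}'$ are \emph{frozen}, i.e.\ constant along its symplectic leaves. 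By the second assumption $\pi(x_0)$ is JK-regular for $\mathcal{P}'$, so the Jordan structure of $\mathcal{P}'$ is locally constant.

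It then remains to construct a complete family in bi-involution on $(Q,\mathcal{P}')$, which is the genuine heart of the argument and the step I expect to be the main obstacle. Here I would split $\mathcal{P}'$ along the generalized eigenspaces of its recursion operator; these are admissible and, by JK-regularity, of locally constant dimension. On each block I would shift the frozen eigenvalue to $0$ via Assertion~\ref{A:NewPoissonCasimir}(3), reducing to a nondegenerate pencil with nilpotent recursion operator, i.e.\ a single Jordan-type block. For such a block one exhibits a $P$-invariant bi-isotropic subbundle that, together with the Casimirs $dq$, is bi-Lagrangian; its dual subbundle is integrable by Corollary~\ref{Cor:PoisDualIntDistInt}, so the Frobenius theorem produces the missing functions. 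In the real smooth case with real eigenvalues the relevant normal form is explicit, while in the general situation one invokes Turiel's classification of flat Poisson pencils \cite{turiel}.

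Finally I would assemble the answer. Let $\mathcal{G}'$ be the complete bi-involution family produced on $(Q,\mathcal{P}')$ and set $\mathcal{G} = \mathcal{F} \cup \pi^{*}\mathcal{G}'$. Reading the correspondence of Theorem~\ref{T:BiPoissReduction}(2) in reverse and pointwise, the preimage of the bi-Lagrangian subspace $d\mathcal{G}'$ under reduction is again bi-Lagrangian; a direct dimension count ($\dim U^{\perp} - \tfrac12\operatorname{rk}\mathcal{L}' = \dim V - \tfrac12\operatorname{rk}\mathcal{L}$, using $\mathcal{K}\subset\Delta$ and the nondegeneracy of $\mathcal{P}'$) confirms this. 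Hence $d\mathcal{G}(x)$ is bi-Lagrangian at the generic point, and by Assertion~\ref{A:BiLagr} the family $\mathcal{G}\supseteq\mathcal{F}$ is complete and in bi-involution on a sufficiently small $Ox_0$.
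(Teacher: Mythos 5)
Your setup is sound and matches the paper's: you correctly verify that $\Delta = d\mathcal{F}$ is bi-isotropic (Lemma~\ref{L:StandIntBiInv}), integrable of constant rank, contains the core, and is admissible (this is the paper's Lemma~\ref{L:StandIntAdm}, proved via Assertion~\ref{A:AdmiSubspaceAv} essentially along the lines you indicate), so Theorem~\ref{T:BiPoissRed} applies and the reduced pencil is of Jordan type with eigenvalues descending to Casimirs. The gap is in what you yourself flag as ``the genuine heart of the argument.'' Exhibiting a $P$-invariant bi-isotropic subbundle of the reduced pencil is pointwise linear algebra (the JK theorem provides such subspaces at every point), but what the theorem requires is an \emph{integrable} bi-Lagrangian subbundle, i.e.\ one locally spanned by differentials of functions, and nothing in your sketch produces integrability. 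Corollary~\ref{Cor:PoisDualIntDistInt} cannot supply it: that corollary takes an integrable isotropic subbundle as a \emph{hypothesis} and concludes integrability of its skew-orthogonal complement, so invoking it for a subbundle constructed only pointwise is circular; likewise Frobenius produces functions only after integrability is known. Your fallback, Turiel's classification, is also not available here: the paper cites \cite{turiel} only in a remark about the excluded case of complex-conjugate eigenvalues, Turiel's results concern flat pencils, and the reduced pencil need not be flat. (The integrability of the generalized eigendistributions you split along is likewise asserted, not proved.)

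The paper closes this gap by a different mechanism: an iteration of reduce-and-extend in which each new function comes from the Darboux--Weinstein theorem rather than from Frobenius. After reduction w.r.t.\ $d\mathcal{G}$ the eigenvalues $\lambda_j(x)$ lie in the new core, so by Assertion~\ref{A:NewPoissonCasimir} each singular combination $\mathcal{A} - \lambda_j(x)\mathcal{B}$ is an honest Poisson bracket of locally constant rank; Darboux--Weinstein then extends any covector $\beta \in \operatorname{Ker}\left(\mathcal{A} - \lambda_j(x_0)\mathcal{B}\right)$ to a local Casimir $g$ of that bracket. Adding this single function keeps the family bi-involutive, integrable and admissible (Assertion~\ref{A:FuncG}), and choosing $\beta$ in the \emph{smallest} Jordan $\lambda_j$-block guarantees that $x_0$ stays JK-regular after the next reduction, because that block shrinks by $2$ while the choice ``smallest block'' is stable under perturbation. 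Repeating until the Jordan blocks are exhausted makes $d\mathcal{G}$ bi-Lagrangian, by Theorem~\ref{T:BiPoissReduction}. So your one-shot plan of ``reduce once, then solve the Jordan case directly'' must be replaced (or completed) by this step-by-step extension; as written, the key step of your proof is missing.
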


The family $\mathcal{G}$ from Theorem~\ref{T:MainGenTh} consists of  first integrals of the system by the following simple statement.

\begin{assertion} \label{A:FirstInt} Consider a Hamiltonian system $v = \mathcal{A} dH$ and a commutative family of functions $\mathcal{G}$ containing the Hamiltonian $H$. Then, every function in  $\mathcal{G}$ is a first integral of the system. \end{assertion}

\begin{proof}[Proof of Assertion~\ref{A:FirstInt}] For any function $f \in \mathcal{G}$ we have \[v(f) = \left\{f, H\right\}_{\mathcal{A}} = 0. \]  Assertion~\ref{A:FirstInt} is proved. \end{proof}

\subsection{Standard integrals are in bi-involution} \label{SubS:StandBiInv}

Standard integrals were defined in Definition~\ref{Def:StandInt}.

\begin{lemma} \label{L:StandIntBiInv} The family of standard integrals $\mathcal{F}$ on $M$ is in bi-involution. \end{lemma}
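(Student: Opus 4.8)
The plan is to prove that for every pair $g,h$ of standard integrals the function $\nu \mapsto \{g,h\}_{\nu} := \mathcal{A}_{\nu}(dg,dh)$, which is affine in $\nu$, vanishes identically. I will lean on three structural facts: the differential of a Casimir $f_{\lambda}$ of a regular bracket lies in the core, $df_{\lambda} \in \operatorname{Ker}\mathcal{A}_{\lambda} \subseteq \mathcal{K}$; the differential of an eigenvalue satisfies $d\lambda_j \in \operatorname{Ker}\mathcal{A}_{-\lambda_j}$ (the classical fact that underlies Assertion~\ref{A:CommutLinear}, which I will recall); and $\mathcal{K}^0 = \bigcap_{\lambda \text{ reg}}\operatorname{Im}\mathcal{A}_{\lambda}$ by Assertion~\ref{A:DualCoreLagr}. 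As a preliminary step I would note that every standard integral is a first integral of $v$: for a Hamiltonian $v(H_{\alpha}) = \langle dH_{\alpha}, \mathcal{A}_{\alpha}dH_{\alpha}\rangle = 0$ by skew-symmetry; for a Casimir $v(f_{\lambda}) = \langle df_{\lambda}, \mathcal{A}_{\lambda}dH_{\lambda}\rangle = -\langle \mathcal{A}_{\lambda}df_{\lambda}, dH_{\lambda}\rangle = 0$; for an eigenvalue I would evaluate $v = \mathcal{A}_{-\lambda_j}dH_{-\lambda_j}$ and use $d\lambda_j \in \operatorname{Ker}\mathcal{A}_{-\lambda_j}$ to get $v(\lambda_j) = 0$.

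The computational heart of the argument is the inclusion $\mathcal{A}_{\nu}dH_{\alpha} \in \mathcal{K}^0$ for all $\alpha,\nu$. Since the system is Hamiltonian with respect to every bracket, $v = \mathcal{A}_{\rho}dH_{\rho} \in \operatorname{Im}\mathcal{A}_{\rho}$ for every regular $\rho$, so $v \in \bigcap_{\rho \text{ reg}}\operatorname{Im}\mathcal{A}_{\rho} = \mathcal{K}^0$. Writing $\mathcal{A}_{\rho}dH_{\alpha} = v + (\rho-\alpha)\mathcal{B}dH_{\alpha}$ and using $\mathcal{A}_{\rho}dH_{\alpha}, v \in \operatorname{Im}\mathcal{A}_{\rho}$ gives $\mathcal{B}dH_{\alpha} \in \operatorname{Im}\mathcal{A}_{\rho}$ for every regular $\rho \neq \alpha$. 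Because the regular values are cofinite in $\bar{\mathbb{K}}$ and the kernels over any infinite set of regular values already span $\mathcal{K}$, the intersection over $\rho \neq \alpha$ is still $\mathcal{K}^0$; hence $\mathcal{B}dH_{\alpha} \in \mathcal{K}^0$, and therefore $\mathcal{A}_{\nu}dH_{\alpha} = v + (\nu-\alpha)\mathcal{B}dH_{\alpha} \in \mathcal{K}^0$ for all $\nu$.

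Bi-involution now splits into cases. For two kernel-type integrals (Casimirs and eigenvalues) it is immediate from Assertion~\ref{A:CommutLinear}: the span of $\mathcal{K}$ together with the relevant eigenvalue differentials is bi-isotropic, so all such brackets vanish. For a Hamiltonian paired with a Casimir I would write $\{H_{\alpha}, f_{\lambda}\}_{\nu} = -\langle df_{\lambda}, \mathcal{A}_{\nu}dH_{\alpha}\rangle$, which vanishes for every $\nu$ since $df_{\lambda} \in \mathcal{K}$ while $\mathcal{A}_{\nu}dH_{\alpha} \in \mathcal{K}^0$; this dispenses with the troublesome ``diagonal'' case $\alpha = \lambda$ without any extra work. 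For two Hamiltonians the affine function $\{H_{\alpha}, H_{\beta}\}_{\nu}$ equals $-v(H_{\beta})$ at $\nu = \alpha$ and $v(H_{\alpha})$ at $\nu = \beta$, so it has two roots and vanishes identically when $\alpha \neq \beta$, and trivially when $\alpha = \beta$.

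The main obstacle is the Hamiltonian–eigenvalue pair, since $d\lambda_j \notin \mathcal{K}$ and the clean annihilator argument above is unavailable. Here $\{H_{\alpha}, \lambda_j\}_{\nu}$ is again affine and vanishes at $\nu = \alpha$ (first integral) and at $\nu = -\lambda_j(x)$ (because $d\lambda_j \in \operatorname{Ker}\mathcal{A}_{-\lambda_j}$), hence identically at every point $x$ with $\lambda_j(x) \neq -\alpha$. A non-constant eigenvalue takes the constant value $-\alpha$ only on a nowhere dense set, so continuity in the smooth case and analyticity in the holomorphic case force $\{H_{\alpha}, \lambda_j\}_{\nu} \equiv 0$; and if $\lambda_j \equiv -\alpha$ is locally constant then $d\lambda_j = 0$ and the bracket vanishes trivially. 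Collecting the four cases yields bi-involution of the whole family $\mathcal{F}$.
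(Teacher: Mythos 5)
Your proof is correct, and at its core it follows the same strategy as the paper: the same four-case decomposition of $\mathcal{F}$, the same key input (Lemma~\ref{L:EigenDiff}, i.e. $d\lambda_j \in \operatorname{Ker}\mathcal{A}_{-\lambda_j}$), the same two-roots-of-an-affine-function trick for the Hamiltonian--Hamiltonian and Hamiltonian--eigenvalue pairs, and the same open-dense/continuity argument to dispose of the locus $\lambda_j(x) = -\alpha$. Where you genuinely diverge is in the cases involving Casimirs: the paper settles these by noting $df_\lambda \in \mathcal{K}$ while all of $d\mathcal{F}$ lies in the mantle distribution $\mathcal{M}$ (with $dH_\alpha \in \mathcal{M}$ obtained from part 3 of Assertion~\ref{A:DualCoreLagr}), whereas you prove the explicit inclusion $\mathcal{A}_\nu dH_\alpha \in \mathcal{K}^0$ for every $\nu$, starting from $v \in \bigcap_{\rho \text{ reg}}\operatorname{Im}\mathcal{A}_\rho = \mathcal{K}^0$ and using the observation that deleting the single value $\rho = \alpha$ from the cofinite set of regular values does not shrink the sum of kernels, hence does not enlarge the intersection of images. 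The two arguments encode the same underlying fact --- your inclusion is precisely what places $dH_\alpha$ in the mantle --- but yours is more self-contained, needing only part 1 of Assertion~\ref{A:DualCoreLagr} and the JK theorem, at the cost of a longer computation; similarly, your pointwise verification that $v(\lambda_j) = 0$ via $v = \mathcal{A}_{-\lambda_j(x)}dH_{-\lambda_j(x)}$ is more elementary than the paper's appeal to the fact that a bi-Hamiltonian field preserves the eigenvalues. One cosmetic caveat: the identity $\mathcal{A}_\rho dH_\alpha = v + (\rho-\alpha)\mathcal{B}dH_\alpha$ presupposes $\alpha$ finite; for $H_\infty$ you should instead write $\mathcal{A}_\rho dH_\infty = \mathcal{A}dH_\infty + \rho v$ and conclude $\mathcal{A}dH_\infty \in \mathcal{K}^0$ by the same reasoning --- a one-line fix, not a gap.
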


In order to prove Lemma~\ref{L:StandIntBiInv} we use an important fact about eigenvalues of Poisson pencils on a manifold $M$.   Although we were not able to find the statement of Lemma~\ref{L:EigenDiff} in
the literature, it is well-known to the experts in the field. For nondegenerate pencils Lemma~\ref{L:EigenDiff} follows from a similar statement about eigenvalues of Nijenhuis operators
(see \cite[Proposition 2.3]{BolsinovNijenhuis}). For the proof see e.g. \cite[Lemma 9.8]{Kozlov23JKRealization}.

\begin{lemma} \label{L:EigenDiff} Let $\mathcal{P} = \left\{ \mathcal{A} + \lambda \mathcal{B} \right\}$ be a Poisson pencil on a manifold $M$. For any JK-regular point $x \in (M, \mathcal{P})$ and any finite eigenvalue $\lambda_j(x) < \infty$ we have \begin{equation} \label{Eq:Eigen1} (\mathcal{A} - \lambda_j (x) \mathcal{B}) d\lambda_j (x) =0.\end{equation} \end{lemma}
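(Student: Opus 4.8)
The plan is to localize at the JK-regular point $x_0$, where all eigenvalues are finite, and to reduce the (possibly degenerate) pencil to a nondegenerate one for which the assertion becomes the classical fact that the differential of an eigenvalue of a Nijenhuis operator is an eigencovector (this is the nondegenerate case quoted in the paper, \cite[Proposition 2.3]{BolsinovNijenhuis}). Throughout I write $\mathcal{A}-\lambda_j\mathcal{B} = \mathcal{A}_{-\lambda_j}$, so that $\lambda_j$ being an eigenvalue means exactly that $\mathcal{A}_{-\lambda_j}$ is degenerate, and the goal is $d\lambda_j \in \operatorname{Ker}\mathcal{A}_{-\lambda_j}$.

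First I would perform bi-Poisson reduction along the core. Near $x_0$ the core distribution $\mathcal{K}$ is spanned by differentials of local Casimir functions (Assertion~\ref{A:LocSpan}), hence integrable; it is bi-isotropic and admissible (Lemma~\ref{L:AdmInMantle}) and trivially contains itself, so Theorem~\ref{T:BiPoissRed} applies with $\Delta = \mathcal{K}$. It yields local coordinates $(p,f,q)$ with $\mathcal{K} = \operatorname{span}\{dq_k\}$, $\mathcal{M} = \mathcal{K}^{\perp} = \operatorname{span}\{df_a, dq_k\}$, and
\[ \mathcal{A}_\lambda = \sum_i \frac{\partial}{\partial p_i}\wedge v_{\lambda,i} + \sum_{a<b} c_{\lambda,ab}(f,q)\,\frac{\partial}{\partial f_a}\wedge\frac{\partial}{\partial f_b}. \]
The induced pencil $\{C_\lambda(f,q)\}$ on the $f$-block is nondegenerate and has the same finite eigenvalues as $\mathcal{P}$ (Assertion~\ref{A:AdmSpectr}). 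Since $C_\lambda$ depends only on $(f,q)$, the eigenvalue $\lambda_j$ descends to a function $\lambda_j = \lambda_j(f,q)$; in particular $d\lambda_j\in\mathcal{M}$ has no $dp_i$-component, and because $\mathcal{A}_\mu dq_k\in\operatorname{span}\{\partial/\partial p_i\}$ for regular $\mu$ we get $\{q_k,\lambda_j\}_\mu = 0$ for every $\mu\in\bar{\mathbb{K}}$ (both brackets are affine in $\mu$).

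Contracting $\mathcal{A}_{-\lambda_j}$ with $d\lambda_j = \sum_a(\partial_{f_a}\lambda_j)\,df_a + \sum_k(\partial_{q_k}\lambda_j)\,dq_k$, the block form of $\mathcal{A}_\lambda$ shows that the $\partial/\partial q_k$-component of $\mathcal{A}_{-\lambda_j}d\lambda_j$ vanishes automatically (the $q$-row of the matrix only pairs with $dp$), while its $\partial/\partial f_b$-component equals $C_{-\lambda_j}\bigl(\sum_a(\partial_{f_a}\lambda_j)\,df_a\bigr)$. Applying the nondegenerate (Nijenhuis) case of the lemma to the pencil $\{C_\lambda(\cdot,q)\}$ for each fixed $q$ gives $C_{-\lambda_j}(d_f\lambda_j) = 0$, so the $f$-component vanishes too. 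Hence $\mathcal{A}_{-\lambda_j}d\lambda_j\in\operatorname{span}\{\partial/\partial p_i\}$, and the entire statement collapses to the vanishing of the $p$-components $\{p_i,\lambda_j\}_{-\lambda_j} = v_{-\lambda_j,i}(\lambda_j)$.

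This last point is the main obstacle, and it is exactly the content beyond the nondegenerate case: it is the contribution of the Kronecker blocks and is vacuous when there is no Kronecker part, i.e. when $\mathcal{M} = T^*M$ and there are no $p$-coordinates. One cannot conclude it from the image/kernel bookkeeping alone, since $\operatorname{Ker}\mathcal{A}_{-\lambda_j}\subset\mathcal{M}$ and therefore $\operatorname{Im}\mathcal{A}_{-\lambda_j}$ genuinely meets $\operatorname{span}\{\partial/\partial p_i\}$; the Poisson (Jacobi) structure must be used. I would settle it with the Jacobi identity for $\{\cdot,\cdot\}_\mu$ applied to $p_i,\lambda_j,q_k$: using $\{\lambda_j,q_k\}_\mu\equiv 0$ it reduces to
\[ \{q_k,\{p_i,\lambda_j\}_\mu\}_\mu = \{\lambda_j,\{p_i,q_k\}_\mu\}_\mu, \]
and since the fields $\mathcal{A}_\mu dq_k$ span the $p$-directions for regular $\mu$, this controls the variation of $\{p_i,\lambda_j\}_\mu$ along $\operatorname{span}\{\partial/\partial p_i\}$. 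The hard part will be reconciling the affine-in-$\mu$ brackets with the pointwise substitution $\mu = -\lambda_j(x)$ so as to force $\{p_i,\lambda_j\}_{-\lambda_j}=0$; this is precisely the computation carried out in \cite[Lemma 9.8]{Kozlov23JKRealization}, to which the argument can be reduced.
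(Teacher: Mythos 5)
You correctly identified where the difficulty lives, but your proposal does not close it, and so it is not a proof: the vanishing of the $p$-components $\{p_i,\lambda_j\}_{-\lambda_j}$ is precisely the assertion that the Kronecker-block components of $d\lambda_j$ lie in the one-dimensional kernels $\operatorname{Ker}A^{(i)}_{-\lambda_j}$ of the individual Kronecker blocks, and no amount of block bookkeeping can give this --- a generic core covector is sent by $\mathcal{A}_{-\lambda_j}$ into $\operatorname{span}\{\partial/\partial p_i\}$, so the Jacobi identity must carry the entire load. Your sketch with the triple $(p_i,\lambda_j,q_k)$ only yields a differential constraint: it expresses the $\partial_p$-derivatives of $\{p_i,\lambda_j\}_{\mu}$ through $\{\lambda_j,\{p_i,q_k\}_{\mu}\}_{\mu}$, which does not vanish in general, and you have no anchoring condition on any slice from which vanishing could propagate; moreover the substitution of the point-dependent value $\mu=-\lambda_j(x)$ into a $\mu$-affine identity, which you flag as ``the hard part,'' is exactly where the content of the lemma sits. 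Ending by citing \cite[Lemma~9.8]{Kozlov23JKRealization} means your argument reduces the lemma to its hard part and then cites the hard part.

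That said, the comparison with the paper is unusual here: the paper gives no internal proof of Lemma~\ref{L:EigenDiff} at all --- it states that the nondegenerate case follows from the Nijenhuis eigenvalue statement \cite[Proposition~2.3]{BolsinovNijenhuis} and refers for the full proof to \cite[Lemma~9.8]{Kozlov23JKRealization}, which is the same reference you fall back on. So your attempt in fact contains strictly more argument than the paper does, and the reduction you carry out is sound and non-circular: Theorem~\ref{T:BiPoissRed} with $\Delta=\mathcal{K}$ does not depend on Lemma~\ref{L:EigenDiff} (integrability of $\mathcal{K}$ comes from Assertion~\ref{A:LocSpan}), after core reduction the Jordan blocks survive intact so the finite spectrum is preserved (consistent with Assertion~\ref{A:AdmSpectr}), $\lambda_j$ indeed descends to a function of $(f,q)$, and the $q$- and $f$-components of $\mathcal{A}_{-\lambda_j}d\lambda_j$ vanish by the block form and the leafwise Nijenhuis statement respectively (with one small repair: if $\infty$ is an eigenvalue then $C_\infty$ is degenerate on the leaves, so you must first pass to a regular basis of the pencil via Assertion~\ref{A:AsserBasChange} before forming the recursion operator). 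A complete write-up along your lines would either have to reproduce the actual computation of \cite[Lemma~9.8]{Kozlov23JKRealization} for the Kronecker part, or find a genuinely new argument for it; as it stands, the proposal is an honest partial reduction with the central step missing.
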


\begin{proof}[Proof of Lemma~\ref{L:StandIntBiInv}] The proof is in several steps: \begin{enumerate}

 \item  Casimir functions $f_{\lambda}$ commute (w.r.t. all brackets $\mathcal{A}_{\lambda}$) with other functions from $\mathcal{F}$, since $df_{\alpha} \in \mathcal{K}$ and $d\mathcal{F} \subset \mathcal{M}$, where $\mathcal{K}$ and $\mathcal{M}$ are the core and mantle distributions respectively. $df_{\alpha} \in \mathcal{K}$ by the definition of the core subspace, $d \lambda_j \in \mathcal{M}$ by Lemma~\ref{L:EigenDiff} and $dH_{\alpha} \in \mathcal{M}$ by  Assertion~\ref{A:DualCoreLagr}. 

\item Eigenvalues $\lambda_j(x)$ and $\lambda_k(x)$ commute (w.r.t. all brackets $\mathcal{A}_{\lambda}$) by  Lemma~\ref{L:EigenDiff} and the Jordan--Kronecker theorem.

\item Hamiltonians $H_{\alpha}$ and $H_{\beta}$ commute, since \[ \left\{ H_{\alpha}, H_{\beta}\right\}_{\beta} = v \left(H_{\alpha}\right) =  \left\{ H_{\alpha}, H_{\alpha}\right\}_{\alpha} = 0.\] Similarly, $\left\{ H_{\alpha}, H_{\beta}\right\}_{\alpha} =0$, implying that $\left\{ H_{\alpha}, H_{\beta}\right\}_{\lambda} = 0$ for all $\lambda \in \bar{\mathbb{C}}$.

\item It remains to prove that Hamiltonians $H_{\alpha}$ and eigenvalues $\lambda_j(x)$ commute. The case when $\lambda_j(x) = \operatorname{const}$ (locally) is trivial. It suffices to prove \begin{equation}  \label{Eq:CommHL} \left\{ H_{\alpha}, \lambda_j(x)\right\}_{\lambda} = 0, \qquad \forall \lambda \in \bar{\mathbb{C}}\end{equation} on an open dense subset of $M$. Thus, without loss of generality we can assume that $\alpha \not = -\lambda_j(x)$. On one hand, \[ \left\{ H_{\alpha}, \lambda_j\right\}_{\alpha} = -v (\lambda_j) = 0,\] since $v$ as a bi-Hamiltonian vector field preserves the pencil $\mathcal{P}$ and, therefore, all eigenvalues $\lambda_j(x)$. On the other hand, by Lemma~\ref{L:EigenDiff}, \[  \left\{ H_{\alpha}, \lambda_j\right\}_{-\lambda_j} = 0.\] Since \eqref{Eq:CommHL} hold for two distinct value $\lambda = \alpha$ and $-\lambda_j$ it also hold for all $\lambda \in \bar{\mathbb{C}}$. 
\end{enumerate}

Lemma~\ref{L:StandIntBiInv} is proved. \end{proof}

Consequently, the standard integrals $\mathcal{F}$ are first integrals of the system by Assertion~\ref{A:FirstInt}. 

\subsection{Family of standard integrals is admissible}

\begin{lemma} \label{L:StandIntAdm} The family of standard integrals $\mathcal{F}$ on $M$ is admissible. \end{lemma}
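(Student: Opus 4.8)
The plan is to verify admissibility pointwise: I will show that at each point $x$ of a small neighborhood $Ux_0$ (small in the sense of Definition~\ref{Def:SmallNeigh}, so that by Assertion~\ref{A:LocSpan} the core $\mathcal{K}_x$ is spanned by the differentials of the Casimir functions in $\mathcal{F}$) the subspace $d\mathcal{F}(x)\subset (T^*_xM,\mathcal{P}(x))$ is admissible. Writing $\mathcal{F}=\{f_\lambda\}\cup\{\lambda_j\}\cup\{H_\alpha\}$ for the Casimirs, eigenvalues and Hamiltonians, I would split
\[ U_1 = \mathcal{K}_x + \operatorname{span}\{d\lambda_j(x)\}, \qquad U_2 = \mathcal{K}_x + \operatorname{span}\{dH_\alpha(x)\mid \alpha\in\bar{\mathbb{K}}\}, \]
so that $d\mathcal{F}(x)=U_1+U_2$, prove that $U_1$ and $U_2$ are each admissible, and conclude that their sum is admissible by Assertion~\ref{A:SumAdm}.

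For $U_1$ this is immediate. By Lemma~\ref{L:EigenDiff} each $d\lambda_j(x)$ lies in $\operatorname{Ker}(\mathcal{A}-\lambda_j\mathcal{B})(x)=\operatorname{Ker}\mathcal{A}_{-\lambda_j}(x)$, so $U_1$ is precisely a subspace of the form $K+\operatorname{span}\{v_1,\dots,v_D\}$ with $v_i\in\operatorname{Ker}(\mathcal{A}+\mu_i\mathcal{B})$ appearing in Assertion~\ref{A:CommutLinear}, and is therefore admissible.

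The substance of the argument is $U_2$. First I check $\mathcal{K}_x\subset U_2\subset \mathcal{M}_x$: the left inclusion is by construction, and for the right inclusion note that $v(x)=\mathcal{A}_\alpha dH_\alpha\in\operatorname{Im}\mathcal{A}_\alpha$ for every regular $\alpha$, whence $v(x)\in\bigcap_{\alpha\text{ reg.}}\operatorname{Im}\mathcal{A}_\alpha=\mathcal{K}_x^{0}$ by Assertion~\ref{A:DualCoreLagr}(1), and therefore $dH_\alpha(x)\in\mathcal{A}_\alpha^{-1}(\mathcal{K}_x^{0})\subset\mathcal{M}_x$ by Assertion~\ref{A:DualCoreLagr}(3). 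By the criterion stated after Lemma~\ref{L:AdmInMantle} (together with Assertion~\ref{A:AdmChains}), it then remains to show that $U_2/\mathcal{K}_x$ is invariant under the induced recursion operator $P$ on the Jordan quotient $\mathcal{M}_x/\mathcal{K}_x$. Using Assertion~\ref{A:AsserBasChange} I may reparametrize the pencil so that both $0$ and $\infty$ are regular at $x$; this only rescales each $dH_\alpha$ modulo $\mathcal{K}_x$ and hence changes neither $U_2$ nor admissibility. Since $v(x)\in\mathcal{K}_x^{0}$, the covector $v$ descends to $\bar v$ on $\mathcal{M}_x/\mathcal{K}_x$ and the induced forms satisfy $\mathcal{A}'_\alpha\,\overline{dH_\alpha}=\bar v$; with $\mathcal{A}'=\mathcal{A}'_0$ and $\mathcal{B}'=\mathcal{A}'_\infty$ now invertible, applying $(\mathcal{B}')^{-1}$ to $(\mathcal{A}'+\alpha\mathcal{B}')\,\overline{dH_\alpha}=\mathcal{B}'\,\overline{dH_\infty}$ yields the Lenard--Magri recursion
\[ P\,\overline{dH_\alpha} = \overline{dH_\infty} - \alpha\,\overline{dH_\alpha}, \qquad P=(\mathcal{B}')^{-1}\mathcal{A}'. \]
Since $\overline{dH_\infty}\in U_2/\mathcal{K}_x$, this shows $P$ maps every generator $\overline{dH_\alpha}$ into $U_2/\mathcal{K}_x$ (equivalently, $\overline{dH_\alpha}=(P+\alpha I)^{-1}\overline{dH_\infty}$ spans the $P$-cyclic subspace generated by $\overline{dH_\infty}$); hence $U_2/\mathcal{K}_x$ is $P$-invariant and $U_2$ is admissible.

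Combining, $d\mathcal{F}(x)=U_1+U_2$ is admissible by Assertion~\ref{A:SumAdm}, which proves the lemma. I expect the one delicate point to be the descent of the bi-Hamiltonian relations $v=\mathcal{A}_\alpha dH_\alpha$ to the core quotient and the bookkeeping with regular values: one must make sure $v$ genuinely factors through $\mathcal{M}_x/\mathcal{K}_x$ (guaranteed by $v(x)\in\mathcal{K}_x^{0}$) and that the recursion operator may be built from a regular form (guaranteed by Assertion~\ref{A:AsserBasChange}). The Casimir and eigenvalue parts, by contrast, are routine given Assertions~\ref{A:LocSpan} and~\ref{A:CommutLinear}.
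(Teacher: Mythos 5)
Your overall architecture coincides with the paper's: the same decomposition of $d\mathcal{F}$ into the core (spanned by the Casimirs, via Assertion~\ref{A:LocSpan}), the eigenvalue part (Lemma~\ref{L:EigenDiff} plus Assertion~\ref{A:CommutLinear}), and the Hamiltonian part, glued together at the end by Assertion~\ref{A:SumAdm}. The only divergence is in the Hamiltonian block $U_2=\mathcal{K}_x+\operatorname{span}\{dH_\alpha(x)\}$: the paper disposes of it in one line by citing Assertion~\ref{A:AdmiSubspaceAv}, which is stated (and proved) earlier precisely for this purpose, whereas you re-derive it by passing to the Jordan quotient $\mathcal{M}_x/\mathcal{K}_x$ and running the Lenard--Magri recursion. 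Your reduction to the quotient is done carefully (the inclusion $U_2\subset\mathcal{M}_x$ via Assertion~\ref{A:DualCoreLagr}, the descent of $v$ using $v\in\mathcal{K}_x^0$, the reparametrization via Assertion~\ref{A:AsserBasChange}), and it is essentially the content of the paper's proof of that assertion.

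There is, however, one incomplete step in your recursion argument, and it is exactly the delicate point. The identity $P\,\overline{dH_\alpha}=\overline{dH_\infty}-\alpha\,\overline{dH_\alpha}$ makes sense only for finite $\alpha$, so it controls $P$ on the generators $\overline{dH_\alpha}$ with $\alpha\neq\infty$ but says nothing about $P\,\overline{dH_\infty}$; your conclusion that ``$P$ maps every generator into $U_2/\mathcal{K}_x$'' is therefore unjustified for the generator $\overline{dH_\infty}$ itself. Your parenthetical is what would close this, but it is asserted, not proved, and the word ``equivalently'' is wrong: the claim that $\operatorname{span}\{(P+\alpha I)^{-1}\overline{dH_\infty}\mid\alpha\ \text{finite, regular}\}$ equals the $P$-cyclic subspace generated by $\overline{dH_\infty}$ is a strictly stronger statement requiring its own argument (e.g.\ via the expansion $(P+\alpha I)^{-1}\overline{dH_\infty}=\alpha^{-1}\overline{dH_\infty}-\alpha^{-2}P\,\overline{dH_\infty}+\cdots$ and taking limits inside the finite-dimensional span); also, for singular finite $\alpha$ one cannot write $\overline{dH_\alpha}=(P+\alpha I)^{-1}\overline{dH_\infty}$ at all, though those generators are already covered by the recursion. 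Note that the paper's proof of Assertion~\ref{A:AdmiSubspaceAv} faces the mirror image of this issue: it works with $Q=P^{-1}$, the recursion $Qv_\alpha=\frac{1}{\alpha}(v_0-v_\alpha)$ fails at the endpoint $\alpha=0$, and that endpoint is recovered by precisely such a limit argument, $Qv_0=\lim_{\alpha\to 0}\frac{1}{\alpha}(v_0-v_\alpha)\in W$. So your route is sound, but you must either prove the cyclic-subspace claim, or add the analogous limit argument for $P\,\overline{dH_\infty}$ --- or simply cite Assertion~\ref{A:AdmiSubspaceAv}, as the paper does.
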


We use the following statement.

\begin{assertion} \label{A:AdmiSubspaceAv} Let $\mathcal{L} = \left\{ A + \lambda B \right\}$ be a linear pencil on a vector space $V$. Let  $v_{\lambda} \in V, \lambda \in \bar{\mathbb{K}}$ be vectors that satisfy \[\beta = A_{\lambda} v_{\lambda}, \] for some fixed covector  $\beta \in V^*$. Then the subspace \[ W = \operatorname{span}\left\{v_{\lambda} \,\, \bigr| \, \, \lambda \in \bar{\mathbb{K}} \right\} + K,\] where $K$ is the core subspace, is admissible. 
\end{assertion}

\begin{proof}[Proof of Assertion~\ref{A:AdmiSubspaceAv}] The proof is in several steps.

\begin{enumerate}

\item \textit{Reduce to the Jordan case.}  By fixing a KCF of $\mathcal{L}$, we can decouple the problem into separate cases for each Jordan and Kronecker block. In the Kronecker case, the JK theorem allows us to easily verify that all $v_{\alpha} \in K$. Hence, in the Kronecker case the subspace $W = K$ is admissible. It remains to consider the Jordan case. 

\item \textit{Without loss of generalization, $A$ and $B$ are regular forms in $\mathcal{L}$.} Indeed, by Assertion~\ref{A:AsserBasChange}, if needed, we can replace $A$ and $B$  with their linear combinations.

\item  \textit{The subspace $W$ is admissible.} Consider the operator $Q= P^{-1} = A^{-1}B$, we need to prove that $W$ is $Q$-invariant. For $\alpha = \infty$ we have \[ Bv_{\infty} = Av_0 \qquad \Leftrightarrow \qquad Qv_{\infty} = v_0. \] For $\alpha \not \in \left\{ 0, \infty\right\}$ we get\[ (\alpha Q + I) v_{\alpha} = v_0 \qquad \Leftrightarrow \qquad Q v_{\alpha} = \frac{1}{\alpha} \left( v_0 - v_{\alpha}\right). \] In the coordinates from the JK theorem it is easy to see that $v_{\alpha}$ is a continuous function on $\alpha$ for regular $\alpha$. Thus, \[Qv_{0} = \lim_{\alpha \to 0} \frac{1}{\alpha} \left( v_0 - v_{\alpha}\right) \in W.\] We see that $W$ is $Q$-invariant, where $Q = P^{-1}$, and thus it is admissible.

\end{enumerate}

Assertion~\ref{A:AdmiSubspaceAv} is proved. \end{proof}

\begin{proof}[Proof of Lemma~\ref{L:StandIntAdm}] 

\begin{itemize}

\item By Definition~\ref{Def:SmallNeigh} and Assertion~\ref{A:LocSpan} differentials of
Casimir functions $f_{\lambda}$ span the core distribution:
\[ \mathcal{K} = \operatorname{span} \left\{ df_{\lambda} \, \bigr| \, f_{\lambda} \in \mathcal{F} \right\}, \] which is an admissible distributions.

    \item By Lemma~\ref{L:EigenDiff} $d \lambda_j(x) \in \operatorname{Ker} \mathcal{A}_{-\lambda_j(x)}$. Thus, by Assertion~\ref{A:CommutLinear} the distribution \begin{equation} \label{Eq:AdmDistEig} \mathcal{K} + \operatorname{span} \left\{ d \lambda_j\right\} = \operatorname{span} \left\{ d\lambda_j, df_{\lambda} \, \bigr| \, \lambda_j, f_{\lambda} \in \mathcal{F} \right\} \end{equation} is admissible. 
    
    \item The distribution \begin{equation} \label{Eq:AdmDistH} \mathcal{K} + \operatorname{span} \left\{ H_{\alpha} \, \bigr| \, \alpha \in \bar{\mathbb{K}} \right\} \end{equation} is admissible by Assertion~\ref{A:AdmiSubspaceAv}.
    
\end{itemize}
 By Assertion~\ref{A:SumAdm} the distribution $d\mathcal{F}$, which is a sum of admissible distributions \eqref{Eq:AdmDistEig} and \eqref{Eq:AdmDistH}, is admissible. Lemma~\ref{L:StandIntAdm} is proved. \end{proof}

\subsection{Proof of Theorem~\ref{T:MainGenTh}}
Our goal is to extend the standard family of functions $\mathcal{F}$ to a complete family of functions $\mathcal{G}$ that are in bi-involution. We begin by setting  $\mathcal{G} = \mathcal{F}$. By Lemmas~\ref{L:StandIntBiInv} and \ref{L:StandIntAdm} $\mathcal{G}$ is an admissible family of functions in bi-involution. By Definition~\ref{Def:SmallNeigh} the core distribution $\mathcal{K} \subset d \mathcal{F}$. The idea of the proof is simple:

\begin{enumerate}

    \item Perform the bi-Poisson reduction w.r.t. $d\mathcal{G}$. 

    \item \label{Step:Extend} Extend $\mathcal{G}$ to a bigger admissible family of functions in bi-involution.
    
    \item Repeat the process until we get enough functions in bi-involution.
    
\end{enumerate}

Since $\mathcal{K} \subset d \mathcal{F} \subset d \mathcal{G}$, after  bi-Poisson reduction w.r.t. $d\mathcal{G}$ the new core distribution $\mathcal{K}_{red} = d\mathcal{G}$ (i.e. all Kronecker blocks are $1 \times 1$). Also, by Assertion~\ref{A:AdmSpectr},  after the reduction all eigenvalues satisfy $d\lambda_j(x) \in \mathcal{K}_{red}$. By Assertion~\ref{A:NewPoissonCasimir}, $\mathcal{A} - \lambda_j(x) \mathcal{B}$ are Poisson brackets. The key question is how to extend the family $\mathcal{G}$ in Step~\ref{Step:Extend}. Our strategy is as follows:

\begin{itemize}
    \item Include a Casimir function of one of these new ``singular'' brackets $\mathcal{A} - \lambda_j(x) \mathcal{B}$.  
\end{itemize} Specifically, we utilize the following statement.

\begin{assertion} \label{A:FuncG} Let $\mathcal{P} = \left\{ \mathcal{A} + \lambda \mathcal{B}\right\}$ be a Poisson pencil on $M$ and $\mathcal{K}$ be its core distribution. Assume that there is an eigenvalue $\lambda(x)$ such that \begin{equation} \label{Eq:LambdaInCore} d\lambda(x) \in \mathcal{K}, \qquad \forall x \in M.\end{equation} Then the following holds:

\begin{enumerate} 

\item For any eigenfunction $g(x)$ such that \begin{equation} \label{Eq:FuncG} \left(\mathcal{A} - \lambda(x) \mathcal{B}\right) dg(x) = 0\end{equation} the distribution \begin{equation} \label{Eq:DistD} \mathcal{D}= \mathcal{K} + \operatorname{span}\left\{dg(x)\right\}\end{equation} is bi-isotropic and admissible. 

\item In a neighborhood of any JK-regular point $x_0 \in M$ there exists a function $g(x)$, given by \eqref{Eq:FuncG}, such that the point $x_0$ remains JK-regular after bi-Poisson reduction w.r.t. the distribution $\mathcal{D}$, given by \eqref{Eq:DistD}.

\end{enumerate}

\end{assertion}

\begin{proof}[Proof of Assertion~\ref{A:FuncG}]

\begin{enumerate}

\item It follows from Assertion~\ref{A:CommutLinear}.

\item $\hat{\mathcal{A}} = \mathcal{A} - \lambda(x) \mathcal{B}$ is a Poisson bracket by Assertion~\ref{A:NewPoissonCasimir}. Since $x_0$ is JK-regular, locally $\operatorname{rk}\hat{\mathcal{A}} = \operatorname{const}$. By the Darboux--Weinstein theorem we can extend any covector $\beta \in \operatorname{Ker}\hat{\mathcal{A}}(x_0) $ to a local Casimir $g(x)$ of  $\hat{\mathcal{A}}$, which is given by \eqref{Eq:FuncG}. 

If a covector $dg(x)$ belongs to a $2m \times 2m$ Jordan $\lambda(x)$-block in a KCF of $\mathcal{P}(x)$, then after bi-Poisson reduction the size of this Jordan block decreases to $(2m-2) \times (2m-2)$.  If we select $\beta$ from the smallest Jordan $\lambda(x_0)$-block in the KCF of $\mathcal{P}(x_0)$, then $dg(x)$ will also belong to a smallest Jordan block locally\footnote{Vectors from the smallest Jordan block satisfy condition of the form $v\not \in \operatorname{Im}P^k$, where $P$ is the recursion operator on $V_J = M/K$. This condition is preserved under small perturbations of $v$.}. This implies that the point $x_0$ remains JK-regular after bi-Poisson reduction.

\end{enumerate}

Assertion~\ref{A:FuncG} is proved.
\end{proof}

The theorem's conditions ensure that $x_0$ is JK-regular after bi-Poisson reduction w.r.t. $d \mathcal{F}$. Hence, we extend the family $\mathcal{G}$ using Assertion~\ref{A:FuncG}. By Theorem~\ref{T:BiPoissReduction}, when the process terminates, the subspaces $d \mathcal{G}(x)$ are bi-Lagrangian subspaces, i.e. $\mathcal{G}$ becomes a complete family of functions in bi-involution.  Theorem~\ref{T:MainGenTh} is proved.

\end{document}